\journalname{JOTA}
\colorlet{mylinkcolor}{red!80!black}
\colorlet{myurlcolor}{green!50!black}
\colorlet{mysectioncolor}{blue!50!black}
\newcommand{\HOL}{H{\"o}lder}
\newcommand{\mynor}{\lVert\cdot\rVert}
\newcommand{\RRb}{\mathbb{R}}
\newcommand{\xxb}{\bar{x}}
\newcommand{\yyb}{\bar{y}}
\newcommand{\MFNU}{M_f(\nu)}
\newcommand{\LFNU}{L_f(\nu)}
\newcommand{\fracnu}{\left(\frac{1+\nu}{2\nu}\right)^\nu}
\newcommand{\vvh}{\hat{v}}
\newcommand{\BBt}{\tilde{B}}
\newcommand{\EEc}{\mathcal{E}}
\newcommand{\BBb}{\mathbb{B}}
\newcommand{\SSb}{\mathbb{S}}
\newcommand{\xopt}{\bar{x}}
\newcommand{\xh}{\hat{x}}
\begin{document}

\title{On the Quality of First-Order Approximation of Functions with H{\"o}lder Continuous Gradient}

\author{%
Guillaume~O.~Berger\and
P.-A.~Absil\and
Rapha\"el~M.~Jungers\and
Yurii~Nesterov}

\institute{%
Guillaume~O.~Berger, Corresponding author \and P.-A.~Absil \and Rapha\"el~M.~Jungers \and Yurii~Nesterov \at
UCLouvain, Louvain-la-Neuve, Belgium \\
\{guillaume.berger,pa.absil,raphael.jungers,yurii.nesterov\}@uclouvain.be}

\date{Received: date / Accepted: date}

\maketitle


\begin{abstract}
We show that \HOL{} continuity of the gradient is not only a sufficient condition, but also a necessary condition for the existence of a global upper bound on the error of the first-order Taylor approximation.
We also relate this global upper bound to the \HOL{} constant of the gradient.
This relation is expressed as an interval, depending on the \HOL{} constant, in which the error of the first-order Taylor approximation is guaranteed to be.
We show that, for the Lipschitz continuous case, the interval cannot be reduced.
An application to the norms of quadratic forms is proposed, which allows us to derive a novel characterization of Euclidean norms.
\end{abstract}

\keywords{\HOL{} continuous gradient \and first-order Taylor approximation \and Lipschitz continuous gradient \and Lipschitz constant \and Euclidean norms}

\subclass{%
68Q25 \and 
90C30 \and 
90C48      
}


\section{Introduction}\label{sec-introduction}

The purpose of this paper is to investigate the relation between two properties of a real-valued function, which play crucial roles in optimization.
The first property of interest is the \HOL{} continuity of the gradient, which means that the variation of the gradient of the function between two points is upper bounded by a power (with exponent smaller than or equal to one) of the distance between the two points; up to some multiplicative constant, called the \HOL{} constant of the gradient.
The second property is that there exists a global upper bound on the error of the first-order Taylor approximation of the function.
This global upper bound takes the form of a power (with exponent between one and two) of the distance between the point of interest and the reference point for the Taylor approximation; the power of the distance can be scaled by a multiplicative constant, called the approximation parameter of the function.

The class of functions with \HOL{} continuous gradient is ubiquitous in optimization.
Indeed, the vast majority of first-order optimization methods (e.g., the gradient descent) requires \HOL{} continuity of the gradient to compute the optimal step size and to assert the convergence of the method to a stationary point \cite{nesterov2013gradient,yashtini2016global,cartis2017worst,nesterov2015universal}.
It is well known that, if a function has \HOL{} continuous gradient, then there is a global upper bound on the error of the first-order Taylor approximation of the function at any point.
Actually, it appears that the majority of the above-mentioned developments only make use of the \HOL{} continuity of the gradient as a convenient sufficient condition to ensure the global upper bound on the error of the first-order Taylor approximation.

For example, in the global convergence analysis found in \cite[Section~3]{yashtini2016global}, the function is assumed to have \HOL{} continuous gradient, and the very first step is a lemma, stating that this property implies the existence of a global upper bound on the error of the first-order Taylor approximation of the function.
In fact, the main complexity result~\cite[Corollary~2]{yashtini2016global} can be obtained by assuming the global upper bound on the error of the first-order Taylor approximation, while disregarding the \HOL{} continuity of the gradient.

Another evidence of the prominent importance of the second property (global upper bound on the error of the first-order Taylor approximation) over the first one (\HOL{} continuity of the gradient) is that it is a generalization of the second property---and not of the first one---that is used as an assumption in \cite{boumal2018global} to generalize global complexity bounds for the minimization of functions defined on Riemannian manifolds.

As already mentioned, the first property is sufficient for the second one.
The following questions then naturally arise.
Is it also necessary?
And, if it is, how can we relate the \HOL{} constant of the gradient to the approximation parameter arising in the second property?
To the best of the authors' knowledge, an answer to this second question is available in the literature only for convex functions with \HOL{} exponent of the gradient equal to one (the particular case of \HOL{} continuity with exponent equal to one is generally referred to as Lipschitz continuity); see, e.g., \cite[Theorem~2.1.5]{nesterov2013introductory}.
However, this second question is important for its implications in optimization: The upper bound on the error of the first-order Taylor approximation is used to compute the step size and to estimate the global rate of convergence of first-order optimization methods; the more accurately we know this bound, the better we can choose the step size and estimate the rate of convergence (see, e.g., Example~\ref{exa-gradient-method}).

In this paper, we provide an answer to the above questions.
We show in Theorem~\ref{thm-holder-general} that a function satisfies the second property, if \emph{and only if} it satisfies the first one.
We also provide an interval, depending on the \HOL{} constant of the gradient, in which the approximation parameter of the function is guaranteed to be.
We show that, in the case of functions with Lipschitz continuous gradient and quadratic upper bound on the first-order Taylor approximation, this interval is tight (see Example~\ref{exa-infinite}).
We also provide a more detailed analysis, when the domain of the function is endowed with a Euclidean norm, i.e., a norm induced by a scalar product.
Finally, we apply these results to quadratic functions; see Section~\ref{sec-app-bilin}.
This allows us to obtain a novel characterization of Euclidean norms.

The paper is organized as follows.
In Section~\ref{sec-definitions}, we introduce notation and definitions.
The questions we address are motivated in Section~\ref{sec-motivation} with an example that demonstrates their importance in optimization.
In Section~\ref{sec-theorem}, we prove the equivalence between the two properties described above, and explain the relation between the \HOL{} constant of the gradient and the approximation parameter.
In Section~\ref{sec-corollaries}, we particularize the results of Section~\ref{sec-theorem} to the case of functions with Lipschitz continuous gradient, and we show that the bounds, derived in this specific case, are tight.
The application to quadratic functions is presented in Section~\ref{sec-app-bilin}.

\section{Notation and Preliminaries}\label{sec-definitions}

In the sequel, $E$ is a real finite-dimensional normed vector space with norm $\mynor$.
The norm $\mynor$ is said to be \emph{Euclidean}, if it is induced by a scalar product.
Equivalently, $\mynor$ is Euclidean, if and only if every pair of vectors $u,v\in E$ satisfies the parallelogram law $\lVert u+v\rVert^2 + \lVert u-v\rVert^2 = 2\lVert u\rVert^2 + 2\lVert v\rVert^2$ (Jordan--von Neumann theorem \cite{jordan1935inner}).
In this paper, we will consider both cases of Euclidean and non-Euclidean norms.

On $\RRb$, we denote the absolute value by $\lvert\cdot\rvert$.
The {\itshape dual} of $E$ is the space of linear maps from $E$ to $\RRb$ and is denoted by $E^*$.
We denote by $\langle \varphi,h\rangle$ the image of $h\in E$ by $\varphi\in E^*$.
We endow $E^*$ with the \emph{dual norm}, $\mynor_*$, defined by
\[
\lVert \varphi \rVert_* = \max\, \{ \langle \varphi,h\rangle : h\in E,\,\lVert h\rVert = 1 \} .
\]

Let $f$ be a function from $E$ to $\RRb$.
In the sequel, we will always assume that $f$ is differentiable.
For $x\in E$, we denote by $f'(x)$ the derivative---which we also term \emph{gradient} as in~\cite{nesterov2013gradient}---of $f$ at $x$.
Note that $f'(x)\in E^*$.
(Thus it acts on vectors $h\in E$, and $\langle f'(x),h\rangle$ is sometimes called the directional derivative of $f$ at $x$, in the direction $h$.)

The first property of interest is defined by:

\begin{definition}[$\nu$-\HOL{} continuous gradient]\label{def-holder}
Let $0\leq\nu\leq1$.
We say that $f$ has \emph{$\nu$-\HOL{} continuous gradient}, if there exists an $M\geq0$, such that, for every $x,y\in E$,
\begin{equation}\label{eq-holder-def}
\lVert f'(x) - f'(y) \rVert_* \leq M \lVert x-y \rVert^\nu .
\end{equation}
In this case, it is easily seen that there exists a smallest $M$ satisfying inequality~\eqref{eq-holder-def} for every $x,y\in E$.
This $M$ is called the \emph{$\nu$-\HOL{} constant} of the gradient of $f$, and is denoted by $\MFNU$:
\[
\MFNU \coloneqq \sup_{x\neq y} \frac{\lVert f'(x) - f'(y) \rVert_*}{\lVert x-y \rVert^\nu} < \infty .
\]\vskip0pt
\end{definition}

Note that not every function has $\nu$-\HOL{} continuous gradient for some $\nu\in[0,1]$.%
\footnote{E.g., $x\in\RRb\mapsto x^3$, or $x\in\RRb\mapsto x^2\sin(1/x^2)$ (with continuous extension at $0$).}
The next definition particularizes the former for $\nu=1$.

\begin{definition}[Lipschitz continuous gradient]\label{def-lipsch}
We say that $f$ has \emph{Lip\-schitz-continuous gradient}, if $f$ has $1$-\HOL{} continuous gradient.
In this case, $M_f(1)$ is called the \emph{Lipschitz constant} of the gradient of $f$.
\end{definition}

As for the second property of interest in this paper, it reads as:

\begin{definition}[$\nu$-approximable]\label{def-approx-parameter}
We will say that $f$ is \emph{$\nu$-approximable}, if there exists an $L\geq0$, such that, for every $x,y\in E$,
\begin{equation}\label{eq-error-approx}
\lvert f(y) - f(x) - \langle f'(x),y-x\rangle \rvert \leq \frac{L}{1+\nu} \lVert y-x \rVert^{1+\nu} .
\end{equation}
The smallest such $L$ will be called the \emph{$\nu$-approximation parameter} of $f$, and will be denoted by $\LFNU$:
\[
\LFNU \coloneqq (1+\nu)\: \sup_{x\neq y} \frac{\lvert f(y) - f(x) - \langle f'(x),y-x \rangle \rvert}{\lVert y-x \rVert^{1+\nu}} < \infty .
\]\vskip0pt
\end{definition}

\begin{remark}\label{rem-affine-invariance}
Clearly, Definitions~\ref{def-holder}--\ref{def-approx-parameter} are invariant under translation, and addition of linear functions.
In other words, if $g(x)={f(x+a)}+\langle \varphi,x\rangle + c$ for some $a\in E$, $\varphi\in E^*$ (i.e., $\varphi$ is a linear map from $E$ to $\RRb$), and $c\in\RRb$, then $g$ is $\nu$-approximable (resp.~has $\nu$-\HOL{} continuous gradient), if and only if $f$ is $\nu$-approximable (resp.~has $\nu$-\HOL{} continuous gradient); and moreover, $M_g(\nu)=\MFNU$, and $L_g(\nu)=\LFNU$.
\end{remark}

The following proposition is a classical result in optimization.
It states that, if $f$ has $\nu$-\HOL{} continuous gradient for some $0\leq\nu\leq1$, then $f$ is $\nu$-approximable.
Moreover, the $\nu$-\HOL{} constant is an upper bound on the $\nu$-approximation parameter:

\begin{proposition}\label{prop-bound-integration}
Let $f$ have $\nu$-\HOL{} continuous gradient.
Then, $f$ is $\nu$-approximable, and $\LFNU \leq \MFNU$.
\end{proposition}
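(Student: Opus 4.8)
The plan is to reduce the two-point estimate \eqref{eq-error-approx} to a one-dimensional computation along the segment joining the two points, and then integrate the \HOL{} bound \eqref{eq-holder-def}. Concretely, I would fix $x,y\in E$, set $h=y-x$, and introduce the scalar function $\phi\colon[0,1]\to\RRb$, $\phi(t)=f(x+th)$. Since $f$ is differentiable, $\phi$ is differentiable with $\phi'(t)=\langle f'(x+th),h\rangle$, and because $t\mapsto f'(x+th)$ is continuous (\HOL{} continuity implies continuity), $\phi'$ is continuous, so the fundamental theorem of calculus applies and gives $f(y)-f(x)=\phi(1)-\phi(0)=\int_0^1\langle f'(x+th),h\rangle\,\mathrm{d}t$.

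Subtracting $\langle f'(x),h\rangle=\int_0^1\langle f'(x),h\rangle\,\mathrm{d}t$ from both sides yields the integral remainder formula
\[
f(y)-f(x)-\langle f'(x),y-x\rangle = \int_0^1 \langle f'(x+th)-f'(x),\,h\rangle\,\mathrm{d}t .
\]
Then I would take absolute values, bound the integrand pointwise by the defining inequality of the dual norm, $\lvert\langle\varphi,h\rangle\rvert\leq\lVert\varphi\rVert_*\lVert h\rVert$, and finally insert the \HOL{} estimate $\lVert f'(x+th)-f'(x)\rVert_*\leq\MFNU\,\lVert th\rVert^\nu=\MFNU\,t^\nu\lVert h\rVert^\nu$, to obtain
\[
\bigl\lvert f(y)-f(x)-\langle f'(x),y-x\rangle\bigr\rvert \leq \MFNU\,\lVert h\rVert^{1+\nu}\int_0^1 t^\nu\,\mathrm{d}t = \frac{\MFNU}{1+\nu}\,\lVert y-x\rVert^{1+\nu}.
\]
This is exactly \eqref{eq-error-approx} with $L=\MFNU$, so $f$ is $\nu$-approximable; and since $\LFNU$ is by definition the least admissible $L$, we get $\LFNU\leq\MFNU$.

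I do not expect a genuine obstacle here: this is the classical ``descent lemma'' argument, and the only points deserving a word of justification are the applicability of the fundamental theorem of calculus to $\phi$ (continuity of $\phi'$, as noted above) and the integrability of $t\mapsto\langle f'(x+th)-f'(x),h\rangle$ on $[0,1]$ (it is bounded there, again by the \HOL{} estimate). The single change relative to the textbook case $\nu=1$ is that the constant bound on the integrand is replaced by the factor $t^\nu$, which integrates to $1/(1+\nu)$ and accounts for the normalization chosen in Definition~\ref{def-approx-parameter}.
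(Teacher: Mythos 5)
Your argument is correct and is precisely the classical integral-remainder (``descent lemma'') computation that the paper itself delegates to by citing \cite[Lemma~1]{yashtini2016global} rather than reproving it. No gaps: the fundamental theorem of calculus applies since $\phi'$ is continuous, and the pointwise bound $\MFNU\,t^\nu\lVert h\rVert^{1+\nu}$ integrates to exactly the constant $\frac{\MFNU}{1+\nu}$ required by Definition~\ref{def-approx-parameter}, giving $\LFNU\leq\MFNU$.
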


\begin{proof}
See \cite[Lemma~1]{yashtini2016global}, for instance.~\qed
\end{proof}

\section{Motivation}\label{sec-motivation}

We present an example of optimization method, which uses inequality~\eqref{eq-error-approx} to assert the convergence of the method to a stationary point.
To compute the step size, the method requires an $L$ satisfying \eqref{eq-error-approx}.
For a given such $L$, the global complexity of the method is proportional to $L^{1/\nu}$.
It is thus \emph{beneficial to choose $L$ as small as possible}, ideally $L=\LFNU$.
The results in the next section provide an interval (or a specific value), depending on $\MFNU$, in which $\LFNU$ is guaranteed to be.

\begin{example}\label{exa-gradient-method}
(from \cite{yashtini2016global})
Let $f$ be a real-valued function, defined on $E=\RRb^n$ (with any norm $\mynor$), and satisfying \eqref{eq-error-approx} for some $L\geq0$ and some%
\footnote{Note that the method requires $\nu>0$.
In fact, finding a descent direction for a non-smooth non-convex function is NP-hard \cite{nesterov2013gradient}, and thus, it is reasonable to ask that $\nu>0$.}
$0<\nu\leq1$ (i.e., $f$ is $\nu$-approximable with parameter $\LFNU\leq L$).
Suppose that there is a lower bound $f_*\in\RRb$ on $f$: $f(x)\geq f_*$ for all $x\in E$.
We use the gradient method to find a stationary point of $f$.
The step size depends on $L$ and $\nu$, and on the norm of the gradient at each iteration point.

The method goes as follows.
(For more details, we refer the reader to \cite{yashtini2016global}.)
We start from a point $x_0\in\RRb^n$.
For $k=0,1,2,\dots$, let $x_k$ be the iterate at step $k$.
The norm of $f'(x_k)$ is denoted by $n_k = \lVert f'(x_k)\rVert_*$.
Let $d_k\in\RRb^n$ be such that $\lVert d_k\rVert=1$ and $\langle f'(x_k),d_k\rangle=n_k$ (i.e., $-d_k$ is a steepest descent direction).

Fix $0<\xi<1$.
Define the step size at iteration $k$ as $h_k=\xi\left(\frac{1+\nu}{L}\right)^{1/\nu}n_k^{1/\nu}$.
Then, the next iterate is defined by $x_{k+1} = x_k - h_kd_k$.
From \eqref{eq-error-approx}, we have
\[
f(x_{k+1}) \leq f(x_k) - \langle f'(x_k),h_kd_k\rangle + \frac{L}{1+\nu} \lVert h_kd_k\rVert^{1+\nu} .
\]
This gives
\begin{align*}
f(x_{k+1}) &\leq f(x_k) - \xi\left(\frac{1+\nu}{L}\right)^{1/\nu}n_k^{1+1/\nu} + \xi^{1+\nu} \left(\frac{1+\nu}{L}\right)^{1/\nu} n_k^{1+1/\nu} \\
&= f(x_k) - \xi (1-\xi^\nu) \left(\frac{1+\nu}{L}\right)^{1/\nu} n_k^{1+1/\nu} .
\end{align*}
Summing over $k$ from $0$ to $K$, we get
\[
\xi (1-\xi^\nu) \left(\frac{1+\nu}{L}\right)^{1/\nu} \sum_{k=0}^K n_k^{1+1/\nu} \leq f(x_0) - f(x_{K+1}) \leq f(x_0) - f_*.
\]
We conclude that
\[
\min_{0\leq k\leq K} \lVert f'(x_k)\rVert_*^{1+1/\nu} \leq \frac{1}{K+1} \left(\frac{L}{1+\nu}\right)^{1/\nu} \frac{f(x_0) - f_*}{\xi (1-\xi^\nu)} .
\]
In particular, if we choose $\xi=\left(\frac{1}{1+\nu}\right)^{1/\nu}$, we obtain
\[
\min_{0\leq k\leq K} \lVert f'(x_k)\rVert_*^{1+1/\nu} \leq \frac{1}{K+1} \frac{1+\nu}{\nu} L^{1/\nu} (f(x_0) - f_*) .
\]

The objective is to converge to a quasi-stationary point: For some fixed $\epsilon>0$, we want to find an $\xxb$ such that $\lVert f'(\xxb) \rVert_*\leq \epsilon$.
Then, the above-presented method stops after a number of iterations not greater than
\begin{equation}\label{eq-iter}
\left\lceil \frac{1}{\epsilon^{1+1/\nu}} \frac{1+\nu}{\nu} L^{1/\nu} (f(x_0) - f_*) \right\rceil,
\end{equation}
where $\lceil\cdot\rceil$ denotes the ceiling operator.
\end{example}

In summary, with the above example, we have shown that:
\begin{itemize}
    \item The existence of an $L$ satisfying \eqref{eq-error-approx} (i.e., being $\nu$-approximable) is sufficient to assert the convergence of the method to a quasi-stationary point.
    Contrary to the way it is presented in many textbooks, it is not necessary to resort to the $\nu$-\HOL{} continuous gradient assumption~\eqref{eq-holder-def}.
    \item The bound~\eqref{eq-iter} on the number of iterations shows that the knowledge of the smallest $L$ satisfying \eqref{eq-error-approx} (i.e., $\LFNU$) allows us to have a better bound on the total number of iterations required by the method.
\end{itemize}

As we will see in the next section, a lower bound on $\LFNU$ can be obtained from the $\nu$-\HOL{} constant of the gradient.
In the above developments, the derivation of \eqref{eq-iter} requires only a global upper bound on the difference between the function and its first-order approximation, and not on the absolute value thereof, as in \eqref{eq-error-approx}.
A more general class of functions can be analyzed, if we allow for different parameters in the lower bound and in the upper bound on the difference between the function and its first-order approximation.
For this class of functions, we obtain similar conclusions as for the case of $\nu$-approximable functions (see next section as well).

\section{Main Results}\label{sec-theorem}

The main contributions of this paper are summarized in Theorems~\ref{thm-holder-general}--\ref{thm-holder-real}.
Theorem~\ref{thm-holder-general} shows that $\nu$-approximability implies $\nu$-\HOL{} continuity of the gradient, and also provides an upper bound on the \HOL{} constant of the gradient.
From this upper bound, we obtain an interval, depending on $\MFNU$, in which the value of $\LFNU$ is guaranteed to be.
We also show that a smaller interval can be obtained, if we further assume that the norm $\mynor$, on the domain of $f$, is Euclidean.

\begin{theorem}\label{thm-holder-general}
Let $0\leq\nu\leq1$.
Let $f$ be a differentiable function from $E$ (with norm $\mynor$) to $\RRb$.
Suppose that $f$ is $\nu$-approximable, and denote by $\LFNU$ its $\nu$-approximation parameter.
Then, $f$ has $\nu$-\HOL{} continuous gradient with $\nu$-\HOL{} constant $\MFNU$ satisfying%
\footnote{With the convention that $0^0=1$, hence, $\left(\frac{1+\nu}{\nu}\right)^\nu=\left(\frac{1+\nu}{\nu}\right)^{\nu/2}=1$, when $\nu=0$.}
\begin{equation}\label{eq-coeff-1}
\LFNU \leq \MFNU \leq 2^{1-\nu} \left(\frac{1+\nu}{\nu}\right)^\nu \LFNU.
\end{equation}
Moreover, if we further assume that $\mynor$ is Euclidean, then
\begin{equation}\label{eq-coeff-2}
\LFNU \leq \MFNU \leq \frac{2^{1-\nu}}{\sqrt{1+\nu}} \left(\frac{1+\nu}{\nu}\right)^{\nu/2} \LFNU .
\end{equation}\vskip0pt
\end{theorem}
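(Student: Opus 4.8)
The goal is to bound $\|f'(x)-f'(y)\|_*$ in terms of $\|x-y\|^\nu$, given only the two-sided Taylor bound \eqref{eq-error-approx}. The natural approach is to fix $x,y\in E$, write $r=\|x-y\|$, and bound the linear functional $\varphi := f'(x)-f'(y)\in E^*$ by testing it against a well-chosen direction. Concretely, for any unit vector $h\in E$ and any $t>0$, apply \eqref{eq-error-approx} twice: once to approximate $f(x+th)$ from $x$, and once to approximate it from $y$ (noting $x+th = y + (x-y) + th$). Subtracting, the function values cancel and we are left with
\[
|\langle f'(x)-f'(y),\, th\rangle - \langle f'(y),\, x-y\rangle \,+\, \text{(something)}|
\]
controlled by $\tfrac{L}{1+\nu}\bigl(t^{1+\nu} + \|x-y+th\|^{1+\nu}\bigr)$ where $L=\LFNU$. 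A cleaner route, which also handles the affine terms, is to use Remark~\ref{rem-affine-invariance}: replace $f$ by $g(z)=f(z)-\langle f'(y),z\rangle$, so that $g'(y)=0$ and $g'(x)=\varphi$, and the approximation parameter is unchanged. Then \eqref{eq-error-approx} at the base point $y$ gives $|g(z)-g(y)|\le \tfrac{L}{1+\nu}\|z-y\|^{1+\nu}$ for all $z$, and at the base point $x$ gives $|g(z)-g(x)-\langle\varphi,z-x\rangle|\le\tfrac{L}{1+\nu}\|z-x\|^{1+\nu}$. Choosing $z = x + th$ and combining the two inequalities yields
\[
\langle\varphi, th\rangle \;\le\; \frac{L}{1+\nu}\Bigl(\|x+th-y\|^{1+\nu} + t^{1+\nu}\Bigr) + \bigl(g(x)-g(y)\bigr),
\]
and the last term is itself bounded by $\tfrac{L}{1+\nu}r^{1+\nu}$.

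Now take $h$ to be (approximately) the maximizing direction for $\varphi$, so $\langle\varphi,h\rangle$ is close to $\|\varphi\|_*$, and optimize over $t>0$. The right-hand side is, up to constants, $\|x-y+th\|^{1+\nu} + t^{1+\nu} + r^{1+\nu}$ divided by $t$. In the general (non-Euclidean) case I would bound $\|x-y+th\|\le r+t$ by the triangle inequality, reducing to minimizing $\bigl((r+t)^{1+\nu}+t^{1+\nu}+r^{1+\nu}\bigr)/t$ over $t$; choosing $t$ proportional to $r$ (say $t=\alpha r$) makes this a one-variable optimization in $\alpha$, and the optimal $\alpha$ together with the convention $0^0=1$ should produce exactly the constant $2^{1-\nu}\bigl(\tfrac{1+\nu}{\nu}\bigr)^\nu$. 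In the Euclidean case the improvement comes from not throwing away the direction of $h$: since $\|\cdot\|$ comes from an inner product, $\|x-y+th\|^2 = r^2 + 2t\langle\!\langle h, x-y\rangle\!\rangle + t^2$, and in the worst case the cross term is $-2tr$ (i.e., $h$ points against $x-y$), but one can also simply use $\|x-y+th\|^{1+\nu}$ with the exact expansion and exploit that $(1+\nu)/2 \le 1$ to apply a concavity/power-mean estimate on $\|x-y+th\|^{2}$ — this is where the extra factor $1/\sqrt{1+\nu}$ and the halved exponent $\nu/2$ enter.

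The lower bound $\LFNU\le\MFNU$ is just Proposition~\ref{prop-bound-integration} applied in the reverse reading (having now established $f$ has $\nu$-\HOL{} continuous gradient), so nothing new is needed there. The main obstacle I anticipate is twofold: first, bookkeeping the three additive error terms so that the constant that falls out of the $t$-optimization is \emph{exactly} $2^{1-\nu}(\tfrac{1+\nu}{\nu})^\nu$ and not merely of the right order — this requires choosing the test point $z=x+th$ and the scaling $t=\alpha r$ judiciously (possibly $z$ should be placed symmetrically, e.g. at the midpoint shifted by $th$, to kill one error term). Second, in the Euclidean case, getting the sharp constant requires carefully handling the expansion of $\|x-y+th\|^{1+\nu}$; the cleanest device is to raise everything to a suitable power or to use the bound $(a)^{(1+\nu)/2}\le \text{(linear in }a)$ valid on the relevant range by concavity of $s\mapsto s^{(1+\nu)/2}$, applied to $a=\|x-y+th\|^2$, so that the inner-product structure is used exactly once and the minimization over $t$ becomes a quadratic-in-$t$ problem with a clean closed form. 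The degenerate case $\nu=0$ must be checked separately, but there the stated constants both collapse to $1$ and the claim $\MFNU\le\LFNU$ combined with Proposition~\ref{prop-bound-integration} forces $\MFNU=\LFNU$, which is easy to verify directly.
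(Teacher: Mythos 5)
Your overall strategy (test $\varphi=f'(x)-f'(y)$ against a dual-norm-maximizing direction, apply \eqref{eq-error-approx} at both base points, optimize the step length $t\propto r$) is the right one and matches the paper's in spirit, but the specific scheme you commit to --- a \emph{single} test point $z=x+th$ --- provably does not deliver the stated constants. Your combined bound is $\|\varphi\|_*\,t\le\frac{L}{1+\nu}\bigl((r+t)^{1+\nu}+t^{1+\nu}+r^{1+\nu}\bigr)$, and already at $\nu=1$ minimizing $\frac{L}{2}\cdot\frac{(r+t)^2+t^2+r^2}{t}=L\bigl(t+r+\frac{r^2}{t}\bigr)$ over $t>0$ gives $3Lr$ at $t=r$, whereas \eqref{eq-coeff-1} requires $2Lr$. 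You correctly sense this ("possibly $z$ should be placed symmetrically\dots to kill one error term"), and that guess is exactly the missing idea: the paper uses \emph{two} test points $z_{1,2}=\frac{\yyb\pm\alpha\vvh}{2}$ placed symmetrically about the midpoint of the segment, so that $z_1-\yyb=-z_2$ and $z_2-\yyb=-z_1$; the four error terms then collapse pairwise to $2\bigl(\lVert z_1\rVert^{1+\nu}+\lVert z_2\rVert^{1+\nu}\bigr)$, and bounding each by $\bigl(\frac{r+\alpha}{2}\bigr)^{1+\nu}$ and taking $\alpha=r/\nu$ yields exactly $2^{1-\nu}\bigl(\frac{1+\nu}{\nu}\bigr)^{\nu}$. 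Without this symmetrization your plan only establishes the qualitative equivalence and a weaker constant.

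The gap is more serious in the Euclidean case. With a single test point you face the cross term $\langle h,x-y\rangle$ in $\lVert x-y+th\rVert^2=r^2+2t\langle h,x-y\rangle+t^2$, and since $h$ is forced to be the maximizer of $\varphi$ you have no control over its sign; the only safe bound reproduces $(r+t)^2$ and gains nothing. The symmetric two-point choice is what makes the inner-product structure usable: the parallelogram identity gives $\lVert z_1\rVert^2+\lVert z_2\rVert^2=\frac{r^2+\alpha^2}{2}$ with the cross terms cancelling identically, after which the concavity estimate $a^p+b^p\le 2^{1-p}(a+b)^p$ and the choice $\alpha=r/\sqrt{\nu}$ give \eqref{eq-coeff-2}. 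Finally, a small but genuine error in your closing remark: at $\nu=0$ the upper-bound constants in \eqref{eq-coeff-1} and \eqref{eq-coeff-2} do not collapse to $1$ but to $2$ (only the factor $\bigl(\frac{1+\nu}{\nu}\bigr)^{\nu}$ becomes $1$ by the convention $0^0=1$, while $2^{1-\nu}\to 2$), and the paper obtains $M_f(0)\le 2L_f(0)$ by letting $\alpha\to+\infty$ in \eqref{eq-bound-1}; the identity $M_f(0)=L_f(0)$ you assert is not claimed and does not follow.
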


See Figure~\ref{fig-fnu} for a comparison of \eqref{eq-coeff-1} and \eqref{eq-coeff-2}.

\begin{proof}
Let us fix arbitrary $\xxb,\yyb\in E$.
From Remark~\ref{rem-affine-invariance}, we may assume, without loss of generality, that $\xxb = 0$, $f(\xxb) = 0$, and $f'(\xxb) = 0$.
Let $\varphi = f'(\yyb)$.
Then, for any two directions $z_1,z_2\in E$, we have (from \eqref{eq-error-approx} with $(x,y)=(0,z_1)$, and with $(x,y)=(\yyb,z_1)$)
\[
\frac{-\LFNU}{1+\nu} \lVert z_1 \rVert^{1+\nu} \leq f(z_1) \leq f(\yyb) + \langle \varphi,z_1-\yyb\rangle + \frac{\LFNU}{1+\nu} \lVert z_1-\yyb \rVert^{1+\nu},
\]
and (from \eqref{eq-error-approx} with $(x,y)=(0,z_2)$ and with $(x,y)=(\yyb,z_2)$)
\[
\frac{\LFNU}{1+\nu} \lVert z_2 \rVert^{1+\nu} \geq f(z_2) \geq f(\yyb) + \langle \varphi,z_2-\yyb\rangle - \frac{\LFNU}{1+\nu} \lVert z_2-\yyb \rVert^{1+\nu} .
\]
Subtracting the rightmost and leftmost sides of the first set of inequalities from the second one, we get
\[
\langle \varphi,z_2-z_1\rangle \leq \frac{\LFNU}{1+\nu} \left( \lVert z_1 \rVert^{1+\nu} + \lVert z_2 \rVert^{1+\nu} + \lVert z_1-\yyb \rVert^{1+\nu} + \lVert z_2-\yyb \rVert^{1+\nu} \right) \!.
\]

Let $\vvh\in E$ be such that $\lVert \vvh\rVert = 1$ and $\langle \varphi,\vvh\rangle=\lVert \varphi\rVert_*$.
(From the compactness of $\{x\in E:\lVert x\rVert=1\}$, it is always possible to find such a $\vvh$.)
Then, let $\alpha\geq0$, and define $z_1 = \frac{\yyb-\alpha\vvh}{2}$ and $z_2 = \frac{\yyb+\alpha\vvh}{2}$.
This gives
\begin{equation}\label{eq-bound-1}
\alpha\, \langle \varphi,\vvh\rangle \leq 2\:\frac{\LFNU}{1+\nu} \left( \left\lVert \frac{\yyb-\alpha\vvh}{2} \right\rVert^{1+\nu} + \left\lVert \frac{\yyb+\alpha\vvh}{2} \right\rVert^{1+\nu} \right) \!.
\end{equation}

{\itshape Non-Euclidean case} Let $r=\lVert \yyb\rVert$.
The right-hand side of \eqref{eq-bound-1} can be bounded as follows:
\[
\left\lVert \frac{\yyb-\alpha\vvh}{2} \right\rVert^{1+\nu} + \left\lVert \frac{\yyb+\alpha\vvh}{2} \right\rVert^{1+\nu} \leq 2 \left(\frac{r+\alpha}{2}\right)^{1+\nu} .
\]
If $\nu>0$, then, from \eqref{eq-bound-1} and the above inequality, we obtain (letting $\alpha = \frac{r}{\nu}$)
\[
\langle \varphi,\vvh\rangle \leq 4\:\frac{\LFNU}{1+\nu} \:\frac{\nu}{r} \left(\frac{r(1+\nu)}{2\nu}\right)^{1+\nu} = 2 \LFNU \fracnu r^\nu .
\]
Hence, we have the conclusion in the case of $\nu>0$.
On the other hand, if $\nu=0$, we get $M_f(0)\leq2L_f(0)$, by letting $\alpha\to+\infty$ in \eqref{eq-bound-1}.

{\itshape Euclidean case} Now, assume that $\mynor$ is Euclidean.
Then, using the inequality $a^p + b^p \leq 2^{1-p} \left(a+b\right)^p$ for every $a,b \geq 0$ and $p \in \left[0,1\right]$, (resulting from the concavity of $x \mapsto x^p$), we get the following upper bound on the right-hand side of \eqref{eq-bound-1}:
\[
\left\lVert \frac{\yyb-\alpha\vvh}{2} \right\rVert^{1+\nu} + \left\lVert \frac{\yyb+\alpha\vvh}{2} \right\rVert^{1+\nu} \leq 2^{(1-\nu)/2} \left( \left\lVert \frac{\yyb-\alpha\vvh}{2} \right\rVert^2 + \left\lVert \frac{\yyb+\alpha\vvh}{2} \right\rVert^2 \right)^{(1+\nu)/2} .
\]
Let $r=\lVert \yyb\rVert$.
Using the parallelogram identity, we obtain
\[
\left\lVert \frac{\yyb-\alpha\vvh}{2} \right\rVert^{1+\nu} + \left\lVert \frac{\yyb+\alpha\vvh}{2} \right\rVert^{1+\nu} \leq 2^{(1-\nu)/2} \left( \frac{r^2+\alpha^2}{2} \right)^{(1+\nu)/2} .
\]
For $\nu>0$, we get (letting $\alpha=\frac{r}{\sqrt{\nu}}$)
\[
\langle \varphi,\vvh\rangle \leq \LFNU \frac{2^{1-\nu}}{\sqrt{1+\nu}} \left(\frac{1+\nu}{\nu}\right)^{\nu/2} r^\nu .
\]
This concludes the proof of the theorem.~\qed
\end{proof}

\begin{figure}[ht]
\centering
\includegraphics[scale=0.8]{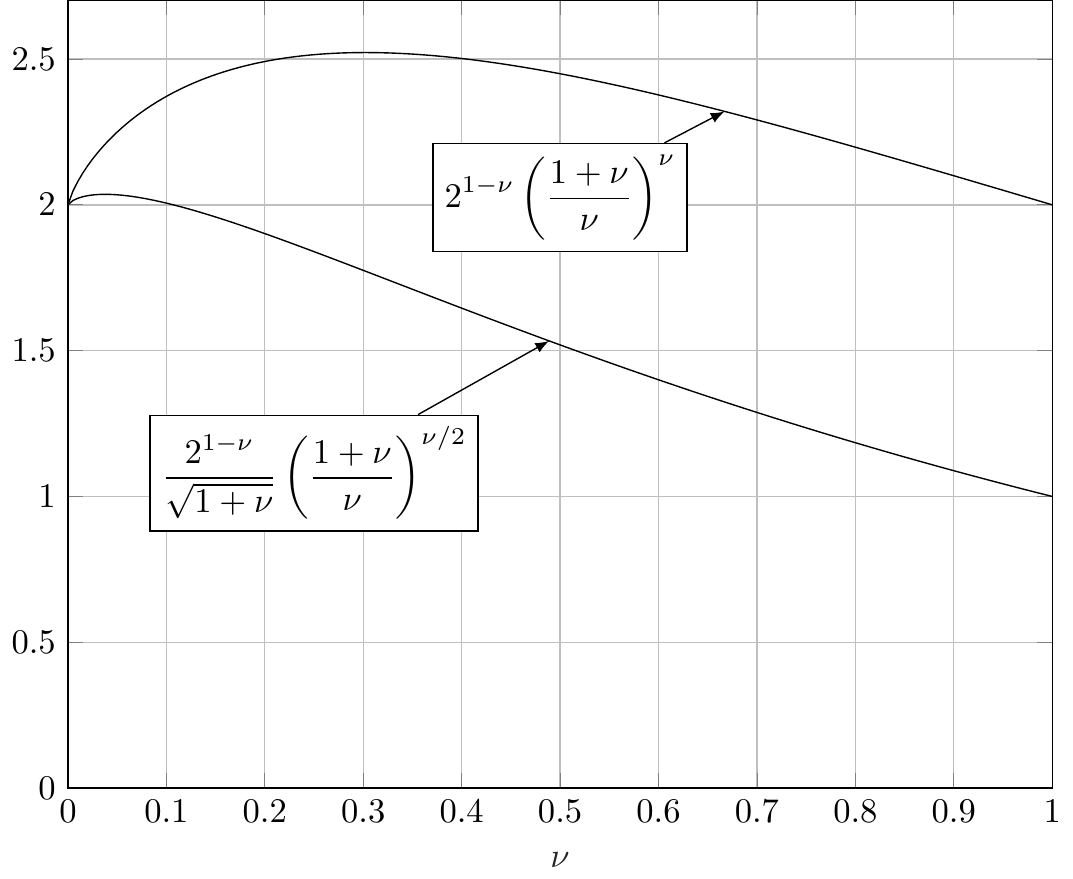}
\caption{Comparison of the coefficients appearing in the upper bounds on the $\nu$-\HOL{} constant in \eqref{eq-coeff-1} and \eqref{eq-coeff-2} in Theorem~\ref{thm-holder-general}.}
\label{fig-fnu}
\end{figure}

We now move to the second main result.
Observe that inequality~\eqref{eq-error-approx} is equivalent to
\begin{equation}\label{eq-two-side-Lipschitz}
\frac{-L}{1+\nu} \lVert y-x \rVert^{1+\nu} \leq f(y) - f(x) - \langle f'(x),y-x\rangle \leq \frac{L}{1+\nu} \lVert y-x \rVert^{1+\nu} .
\end{equation}
In some situations, we may also want to have different values of $L$ for the lower bound and the upper bound in \eqref{eq-two-side-Lipschitz}.
This arises, for example, if we consider convex functions.
In this case, the lower bound is zero, since the first-order approximation of a convex function always lies below the graph of the function.
This leads to the following theorem.
(Note that, for this theorem, we do not make the distinction between the Euclidean and non-Euclidean case.)

\begin{theorem}\label{thm-holder-real}
Let $f$ be a differentiable function from $E$ (with norm $\mynor$) to $\RRb$.
Let $0\leq\nu\leq1$, and suppose there exist $L^-\geq0$ and $L^+\geq0$, such that, for every $x,y\in E$,
\begin{equation}\label{eq-error-diff}
\frac{-L^-}{1+\nu} \lVert x-y \rVert^{1+\nu} \leq f(y) - f(x) - \langle f'(x),y-x\rangle \leq \frac{L^+}{1+\nu} \lVert y-x \rVert^{1+\nu} .
\end{equation}
Then, $f$ has $\nu$-\HOL{} continuous gradient, and the $\nu$-\HOL{} constant $\MFNU$ satisfies%
\footnote{With the convention that $0^0=1$, as in Theorem~\ref{thm-holder-general}.}
\[
\MFNU \leq 2^{-\nu} \left(\frac{1+\nu}{\nu}\right)^\nu \left(L^-+L^+\right) \!.
\]\vskip0pt
\end{theorem}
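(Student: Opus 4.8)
The plan is to mimic the proof of Theorem~\ref{thm-holder-general}, but to keep track of the two constants $L^-$ and $L^+$ separately. First I would fix arbitrary $\xxb,\yyb\in E$ and, by Remark~\ref{rem-affine-invariance}, reduce to the case $\xxb=0$, $f(\xxb)=0$, $f'(\xxb)=0$; set $\varphi=f'(\yyb)$. For two directions $z_1,z_2\in E$, I would then invoke \eqref{eq-error-diff} at the four pairs $(0,z_1)$, $(\yyb,z_1)$, $(0,z_2)$, $(\yyb,z_2)$, choosing the side of each inequality so that the subsequent combination telescopes favourably: at $(0,z_1)$ keep the lower bound $f(z_1)\geq-\frac{L^-}{1+\nu}\lVert z_1\rVert^{1+\nu}$ and at $(\yyb,z_1)$ the upper bound; at $(0,z_2)$ keep the upper bound $f(z_2)\leq\frac{L^+}{1+\nu}\lVert z_2\rVert^{1+\nu}$ and at $(\yyb,z_2)$ the lower bound.

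Adding the two inequalities obtained for $z_1$ and for $z_2$ cancels $f(\yyb)$, $f(z_1)$ and $f(z_2)$ and leaves
\[
\langle\varphi,z_2-z_1\rangle \leq \frac{L^-}{1+\nu}\bigl(\lVert z_1\rVert^{1+\nu}+\lVert z_2-\yyb\rVert^{1+\nu}\bigr) + \frac{L^+}{1+\nu}\bigl(\lVert z_1-\yyb\rVert^{1+\nu}+\lVert z_2\rVert^{1+\nu}\bigr).
\]
As in the proof of Theorem~\ref{thm-holder-general}, I would pick $\vvh\in E$ with $\lVert\vvh\rVert=1$ and $\langle\varphi,\vvh\rangle=\lVert\varphi\rVert_*$, and set $z_1=\frac{\yyb-\alpha\vvh}{2}$, $z_2=\frac{\yyb+\alpha\vvh}{2}$ for $\alpha\geq0$. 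Then $\lVert z_1\rVert=\lVert z_2-\yyb\rVert$ and $\lVert z_1-\yyb\rVert=\lVert z_2\rVert$, and $z_2-z_1=\alpha\vvh$, so the bound collapses to
\[
\alpha\,\lVert\varphi\rVert_* \leq \frac{2}{1+\nu}\left(L^-\left\lVert\tfrac{\yyb-\alpha\vvh}{2}\right\rVert^{1+\nu} + L^+\left\lVert\tfrac{\yyb+\alpha\vvh}{2}\right\rVert^{1+\nu}\right).
\]

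To finish, I would bound both norms on the right by $\frac{r+\alpha}{2}$, where $r=\lVert\yyb\rVert$, using the triangle inequality and $\lVert\vvh\rVert=1$, which gives $\alpha\lVert\varphi\rVert_*\leq\frac{L^-+L^+}{(1+\nu)2^\nu}(r+\alpha)^{1+\nu}$. For $\nu>0$, minimizing $(r+\alpha)^{1+\nu}/\alpha$ over $\alpha>0$ yields the optimal choice $\alpha=r/\nu$, whence $\lVert\varphi\rVert_*\leq 2^{-\nu}\left(\frac{1+\nu}{\nu}\right)^\nu(L^-+L^+)\,r^\nu$; for $\nu=0$, letting $\alpha\to+\infty$ gives $\lVert\varphi\rVert_*\leq L^-+L^+$, which agrees with the stated bound under the convention $0^0=1$. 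Since $\varphi=f'(\yyb)-f'(\xxb)$ and $r=\lVert\yyb-\xxb\rVert$, and $\xxb,\yyb$ were arbitrary, this establishes $\MFNU\leq 2^{-\nu}\left(\frac{1+\nu}{\nu}\right)^\nu(L^-+L^+)$.

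The only delicate point is the bookkeeping in the first step: one must select which side of \eqref{eq-error-diff} to use at each of the four pairs so that the sum of the two combined inequalities carries $L^-$ and $L^+$ with coefficient one each, rather than, say, $2\max\{L^-,L^+\}$; once the right pairing is found, the remaining optimization over $\alpha$ is exactly the one already done in Theorem~\ref{thm-holder-general} (as a sanity check, setting $L^-=L^+=\LFNU$ recovers the non-Euclidean bound \eqref{eq-coeff-1}). Note also that no Euclidean refinement is available here: the two norm terms on the right of the collapsed inequality carry the distinct coefficients $L^-$ and $L^+$, so the concavity/parallelogram argument used in the Euclidean part of Theorem~\ref{thm-holder-general} can no longer be applied to merge them into a single term.
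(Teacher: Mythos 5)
Your proposal is correct and follows essentially the same route as the paper's proof: the same reduction via Remark~\ref{rem-affine-invariance}, the same four applications of \eqref{eq-error-diff} with the same choice of which side to keep at each pair, the same points $z_1=\frac{\yyb-\alpha\vvh}{2}$, $z_2=\frac{\yyb+\alpha\vvh}{2}$, and the same bound $(L^-+L^+)\left(\frac{r+\alpha}{2}\right)^{1+\nu}$ followed by the optimization $\alpha=r/\nu$ already carried out in Theorem~\ref{thm-holder-general}. The bookkeeping you flag as delicate is resolved exactly as in the paper, and your closing observation about the unavailability of the Euclidean refinement is consistent with the paper's remark that no Euclidean/non-Euclidean distinction is made for this theorem.
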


\begin{proof}
The proof is similar to the proof of Theorem~\ref{thm-holder-general}.
Let us fix arbitrary $\xxb,\yyb\in E$.
Without loss of generality, we may assume that $\xxb = 0$, $f(\xxb) = 0$, and $f'(\xxb) = 0$.
Denote $\varphi = f'(\yyb)$.
Then, for any two directions $z_1,z_2\in E$, we have (from \eqref{eq-error-diff} with $(x,y)=(0,z_1)$ and with $(x,y)=(\yyb,z_1)$)
\[
\frac{-L^-}{1+\nu} \lVert z_1 \rVert^{1+\nu} \leq f(z_1) \leq f(\yyb) + \langle \varphi,z_1-\yyb\rangle + \frac{L^+}{1+\nu} \lVert z_1-\yyb \rVert^{1+\nu},
\]
and (from \eqref{eq-error-diff} with $(x,y)=(0,z_2)$ and with $(x,y)=(\yyb,z_2)$)
\[
\frac{L^+}{1+\nu} \lVert z_2 \rVert^{1+\nu} \geq f(z_2) \geq f(\yyb) + \langle \varphi,z_2-\yyb\rangle - \frac{L^-}{1+\nu} \lVert z_2-\yyb \rVert^{1+\nu} .
\]
Subtracting the rightmost and leftmost sides of the first set of inequalities from the second one, we get
\[
\langle \varphi,z_2-z_1\rangle \leq \frac{L^-}{1+\nu} \left( \lVert z_1 \rVert^{1+\nu} + \lVert z_2-\yyb \rVert^{1+\nu} \right) + \frac{L^+}{1+\nu} \left( \lVert z_2 \rVert^{1+\nu} + \lVert z_1-\yyb \rVert^{1+\nu} \right) \!.
\]
Let $\vvh\in E$ be such that $\lVert \vvh \rVert = 1$ and $\langle \varphi,\vvh\rangle=\lVert \varphi \rVert_*$.
Then, let $\alpha\geq0$, and define $z_1 = \frac{\yyb-\alpha\vvh}{2}$ and $z_2 = \frac{\yyb+\alpha\vvh}{2}$.
This gives
\begin{equation}\label{eq-bound-2}
\alpha \, \langle \varphi,\vvh\rangle \leq \frac{2}{1+\nu} \left( L^-\left\lVert \frac{\yyb-\alpha\vvh}{2} \right\rVert^{1+\nu} + L^+\left\lVert \frac{\yyb+\alpha\vvh}{2} \right\rVert^{1+\nu} \right) \!.
\end{equation}
Let $r=\lVert \yyb\rVert$.
The right-hand side of \eqref{eq-bound-2} can be bounded as follows:
\[
L^-\left\lVert \frac{\yyb-\alpha\vvh}{2} \right\rVert^{1+\nu} + L^+\left\lVert \frac{\yyb+\alpha\vvh}{2} \right\rVert^{1+\nu} \leq (L^-+L^+) \left(\frac{r+\alpha}{2}\right)^{1+\nu} .
\]
We conclude in the same way as for the proof of Theorem~\ref{thm-holder-general}.~\qed
\end{proof}

\begin{corollary}\label{cor-convex-case}
Let $f$ be $\nu$-approximable.
If $f$ is convex, then the upper bound in \eqref{eq-coeff-1} can be improved by a factor $1/2$: $\LFNU \leq \MFNU \leq 2^{-\nu} \left(\frac{1+\nu}{\nu}\right)^\nu \LFNU$.
\end{corollary}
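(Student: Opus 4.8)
The plan is to recognize this as a direct specialization of Theorem~\ref{thm-holder-real}, exploiting the fact that for a convex function the error of the first-order Taylor approximation is one-sided.

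First I would recall the standard gradient inequality for a differentiable convex function: for all $x,y\in E$,
\[
f(y) - f(x) - \langle f'(x), y-x\rangle \geq 0 .
\]
Since $f$ is assumed $\nu$-approximable with parameter $\LFNU$, the upper bound in \eqref{eq-error-diff} holds with $L^+ = \LFNU$, while the convexity inequality above supplies the lower bound in \eqref{eq-error-diff} with $L^- = 0$. Hence $f$ satisfies the hypotheses of Theorem~\ref{thm-holder-real} with $L^- = 0$ and $L^+ = \LFNU$.

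Then I would invoke Theorem~\ref{thm-holder-real} with these values, which immediately gives that $f$ has $\nu$-\HOL{} continuous gradient and
\[
\MFNU \leq 2^{-\nu}\left(\frac{1+\nu}{\nu}\right)^\nu \left(L^- + L^+\right) = 2^{-\nu}\left(\frac{1+\nu}{\nu}\right)^\nu \LFNU ,
\]
i.e., the upper bound of \eqref{eq-coeff-1} divided by $2$, as claimed. The lower bound $\LFNU \leq \MFNU$ requires nothing new: it is already part of \eqref{eq-coeff-1} in Theorem~\ref{thm-holder-general} (equivalently, it follows from Proposition~\ref{prop-bound-integration} now that $\nu$-\HOL{} continuity of the gradient is established). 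There is essentially no obstacle here; the only point deserving a line of justification is that convexity legitimately allows the choice $L^- = 0$ in \eqref{eq-error-diff}, and the edge case $\nu = 0$ is covered by the convention $0^0 = 1$ already adopted in Theorem~\ref{thm-holder-real}.
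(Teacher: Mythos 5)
Your proposal is correct and matches the paper's intended argument exactly: the corollary is a direct application of Theorem~\ref{thm-holder-real} with $L^-=0$ (from the convexity gradient inequality) and $L^+=\LFNU$, with the lower bound $\LFNU\leq\MFNU$ supplied by Proposition~\ref{prop-bound-integration}. The paper's own discussion preceding Theorem~\ref{thm-holder-real} explicitly flags that for convex functions the lower bound in \eqref{eq-error-diff} is zero, which is precisely the observation you use.
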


\section{Application to Functions with Lipschitz Continuous Gradient}\label{sec-corollaries}

In this section, we particularize Theorem~\ref{thm-holder-general} and Corollary~\ref{cor-convex-case} to the classical case of $\nu=1$ (Lipschitz continuity).
Moreover, we obtain that the bounds are tight, meaning that there exist functions, such that $\MFNU$ attains either the lower or upper bound in \eqref{eq-coeff-1} when $\nu=1$.
Recall that $f$ is $1$-approximable, if there exists an $L\geq0$, such that, for every $x,y\in E$,
\begin{equation}\label{eq-error-lip}
\lvert f(y) - f(x) - \langle f'(x),y-x\rangle \rvert \leq \frac{L}{2} \lVert y-x \rVert^2 .
\end{equation}
To simplify the notation, we will denote $L_f = L_f(1)$ and $M_f=M_f(1)$.

\begin{corollary}\label{cor-lip-bound}
Let $f$ be a differentiable function from $E$ (with norm $\mynor$) to $\RRb$.
Then, $f$ is $1$-approximable, if and only if the gradient of $f$ is Lipschitz continuous.
Moreover, we have the following bounds, depending on the assumptions on $f$ and $\mynor$:
\begin{enumerate}
    \item In general, we have $\frac{1}{2}M_f\leq L_f\leq M_f$.
    \item If $\mynor$ is Euclidean, then $L_f=M_f$.
    \item If $f$ is convex, then $L_f=M_f$.
\end{enumerate}\vskip0pt
\end{corollary}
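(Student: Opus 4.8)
The plan is to derive the entire statement by specializing the general results of Section~\ref{sec-theorem} to $\nu=1$, so that no new argument is needed. For the equivalence, I would invoke Proposition~\ref{prop-bound-integration} with $\nu=1$: it states that if $f$ has $1$-\HOL{} (i.e., Lipschitz) continuous gradient, then $f$ is $1$-approximable, and moreover $L_f\le M_f$. Conversely, Theorem~\ref{thm-holder-general} with $\nu=1$ gives that $1$-approximability implies that $f$ has Lipschitz continuous gradient. Combining the two implications yields the claimed ``if and only if''.

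For item~1, I would simply evaluate the coefficient in \eqref{eq-coeff-1} at $\nu=1$: since $2^{1-\nu}\bigl(\tfrac{1+\nu}{\nu}\bigr)^{\nu}=2^{0}\cdot 2^{1}=2$, inequality~\eqref{eq-coeff-1} reads $L_f\le M_f\le 2L_f$, which is exactly $\tfrac12 M_f\le L_f\le M_f$. For item~2, I would do the same with \eqref{eq-coeff-2}: the Euclidean coefficient $\tfrac{2^{1-\nu}}{\sqrt{1+\nu}}\bigl(\tfrac{1+\nu}{\nu}\bigr)^{\nu/2}$ equals $\tfrac{1}{\sqrt 2}\cdot\sqrt 2=1$ at $\nu=1$, so \eqref{eq-coeff-2} forces $M_f\le L_f$; together with $L_f\le M_f$ from item~1 this gives $L_f=M_f$. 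For item~3, I would apply Corollary~\ref{cor-convex-case} at $\nu=1$: the improved coefficient $2^{-\nu}\bigl(\tfrac{1+\nu}{\nu}\bigr)^{\nu}$ equals $2^{-1}\cdot 2=1$, hence $L_f\le M_f\le L_f$, i.e., $L_f=M_f$.

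I do not expect any genuine obstacle here, since the corollary is a pure specialization; the only points worth verifying are that $\nu=1$ is an admissible value in each of Proposition~\ref{prop-bound-integration}, Theorem~\ref{thm-holder-general}, and Corollary~\ref{cor-convex-case} (it is, as all three allow $0\le\nu\le1$), and that the arithmetic simplifications of the three coefficients at $\nu=1$ are as stated above.
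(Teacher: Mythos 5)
Your proposal is correct and follows essentially the same route as the paper: the authors likewise obtain the equivalence and items~1--2 by evaluating Theorem~\ref{thm-holder-general} (together with Proposition~\ref{prop-bound-integration}) at $\nu=1$, and item~3 from Corollary~\ref{cor-convex-case}. Your arithmetic for the three coefficients at $\nu=1$ is accurate, so nothing is missing.
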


The proof of the corollary directly follows from Theorem~\ref{thm-holder-general} for the first and second items.
The third item can be easily proved with Corollary~\ref{cor-convex-case}.
(Let us also mention that the case of convex functions is also proved, with different arguments, in \cite[Theorem~2.1.5]{nesterov2013introductory}, for example.)

We would like to know whether the bounds on $L_f$ that we obtained in the first item of Corollary~\ref{cor-lip-bound} are tight.
The second and third items directly give that the upper bound $L_f=M_f$ is reached, when $\mynor$ is Euclidean, or when $f(\cdot)$ is convex.
Hence, the main question is: ``Is there a normed space $(E,\mynor)$ and a function $f$, defined on $E$, such that $L_f = \frac{1}{2}M_f$?''
The answer is yes, as shown in the following example:

\begin{example}\label{exa-infinite}
Let $E=\RRb^2$ with the \emph{infinity} norm, $\lVert x\rVert = \max\,\{\,\allowbreak \lvert x^{(1)}\rvert,\lvert x^{(2)}\rvert \,\}$ for all $x=(x^{(1)},x^{(2)})^\top\in\RRb^2$.
(We use superscripts to denote the components of vectors in $\RRb^2$.)
Define $f(x)=(x^{(1)})^2-(x^{(2)})^2$.
The gradient of $f$ at $x$ satisfies $\langle f'(x),h\rangle = 2x^{(1)}h^{(1)}-2x^{(2)}h^{(2)}$ for all $h=(h^{(1)},h^{(2)})^\top\in E$.

We check that \eqref{eq-error-lip} holds with $L=2$.
Indeed, for any $x,y\in\RRb^2$,
\begin{align*}
\text{Left-hand side of \eqref{eq-error-lip}} &= \Big\lvert \big[y^{(1)}-x^{(1)}\big]^2 - \big[y^{(2)}-x^{(2)}\big]^2 \Big\rvert \\
&\leq \max\kern1pt \left\{\, \big\lvert y^{(1)}-x^{(1)} \big\rvert^2,\big\lvert y^{(2)}-x^{(2)} \big\rvert^2 \,\right\} \\
&= \lVert y-x \rVert^2 .
\end{align*}
On the other hand, if $x=(1,1)^\top$, and $y=(0,0)^\top$, then $\langle f'(x) - f'(y),h\rangle = 2h^{(1)}-2h^{(2)}$ for every $h\in\RRb^2$.
Taking $h=(1,-1)^\top$, we get
\[
\lVert f'(x) - f'(y) \rVert_* \geq \lvert \langle f'(x),h\rangle - \langle f'(y),h\rangle \rvert = 4 .
\]
Since $\lVert h\rVert=1$, it follows that $M_f\geq4$.
Hence, we have $L_f\leq 2 \leq \frac{1}{2} M_f$, which concludes the proof that $L_f=\frac{1}{2}M_f$ for some functions $f$.
\end{example}

Before ending this section, we would like to point out that there is still a gap between the first and the second items of Corollary~\ref{cor-lip-bound}.
Indeed, in the second item, we state: if the norm on $E$ is Euclidean, then $L_f=M_f$.
On the other hand, if $\mynor$ is any norm, then we can ensure that $\frac{1}{2}M_f\leq L_f$.
However, we do not say anything about the possibility of finding a space $E$ with a non-Euclidean norm, but such that every function on $E$ satisfies $L_f=M_f$.
We will prove in Section~\ref{sec-app-bilin} that \emph{this situation is impossible}.
In other words, if all functions on a space $E$ (with norm $\mynor$) satisfy $L_f=M_f$, then $\mynor$ is Euclidean.
In fact, it suffices to have $L_f=M_f$ for every quadratic function with rank $2$ (to be defined below), to conclude that the norm $\mynor$ is Euclidean.

\section{Application: Norm of Quadratic Functions}\label{sec-app-bilin}

In this section, we apply the results of the previous section to quadratic functions, that is, functions defined by self-adjoint operators.
We will use Corollary~\ref{cor-lip-bound} to derive bounds on the norms of self-adjoint operators; see Proposition~\ref{pro-bilin-bounds-various}.
We do not believe that the results presented in Proposition~\ref{pro-bilin-bounds-various} are new.
(For instance, the case of $\mynor$ Euclidean is a standard result in functional analysis on Hilbert spaces; see, e.g., \cite{friedman1982foundations}.)
However, we will use these results---and more precisely the fact that they are a consequence of Corollary~\ref{cor-lip-bound}---to show that it is impossible to have a space $E$ with a non-Euclidean norm, but such that every function on $E$ satisfies $L_f=M_f$ (cf.~the last paragraph of the previous section).

\begin{definition}[Self-adjoint operator]\label{def-bilin}
Let $E$ be a real finite-dimensional vector space.
\begin{enumerate}
    \item A \emph{self-adjoint operator} on $E$ is a linear map $B:E\to E^*$, $x\mapsto Bx$, satisfying $\langle Bx,y\rangle = \langle By,x\rangle$ for every $x,y\in E$.
    \item If $B$ is a self-adjoint operator on $E$, we let $Q_B:E\to\RRb$ be the \emph{quadratic form} associated with $B$, defined by $Q_B(x)=\langle Bx,x\rangle$ for every $x\in E$.
\end{enumerate}\vskip0pt
\end{definition}

The set of self-adjoint operators on $E$ is a real (finite-dimensional) vector space.
Given a norm $\mynor$ on $E$, we define the following norm on this space:
\[
\lVert B\rVert = \max \, \{\, \langle Bx,y\rangle : \lVert x\rVert = \lVert y\rVert = 1 \,\} = \max\, \{\, \lVert Bx\rVert_* : \lVert x\rVert = 1 \,\} .
\]
We also define the norm of a quadratic form $Q_B$:
\[
\lVert Q_B\rVert = \max\, \{\, \lvert Q_B(x)\rvert : \lVert x\rVert = 1 \,\} = \max\, \{\, \lvert\langle Bx,x\rangle\rvert : \lVert x\rVert = 1 \,\} .
\]

The quantities $\lVert B\rVert$ and $\lVert Q_B^{}\rVert$ appear in several fields of mathematics:

\begin{example}\label{exa-self-adjoint}
If $E=\RRb^n$, then $B$ can be identified with a symmetric matrix $\BBt\in\RRb^{n\times n}$.
If $\lVert x\rVert=\sqrt{x^\top x}$ (canonical Euclidean norm), then $\lVert B\rVert=\lVert Q_B^{}\rVert=\rho(\BBt)$, where $\rho(\BBt)$ is the largest absolute value of the eigenvalues of $\BBt$.

If $\mynor$ is the $\ell_1$-norm, i.e., $\lVert x\rVert = \sum_{i=1}^n \lvert x^{(i)}\rvert$, where $x=(x^{(1)},\ldots,x^{(n)})^\top$, then $\lVert B\rVert=\max_{1\leq i,j\leq n}\, \lvert \BBt^{(i,j)}\rvert$, where $\BBt^{(i,j)}$ is the $(i,j)$th entry of matrix $\BBt$.
On the other hand, the value of $\lVert Q_B^{}\rVert$ is, in general, NP-hard to compute.
This quantity appears, for instance, in the problem of determining the largest clique in a graph (Motzkin--Straus theorem \cite{motzkin1965maxima}).

The quantity $\lVert Q_B^{}\rVert$, with $\lVert x\rVert=\max_{1\leq i\leq n}\, \lvert x^{(i)}\rvert$ (\emph{infinity} norm), appears, for instance, in the MAXCUT problem \cite[Section~4.3.3]{ben2001lectures}.
If $\BBt$ is a positive semidefinite matrix (as it is the case in the MAXCUT problem), then we can show that $\lVert Q_B^{}\rVert=\lVert B\rVert$ (see Proposition~\ref{pro-bilin-bounds-various}).
\end{example}

Clearly, $\lVert Q_B\rVert\leq \lVert B\rVert$.
The following proposition, which follows from Corollary~\ref{cor-lip-bound}, provides a partial converse result.

\begin{proposition}\label{pro-bilin-bounds-various}
Let $E$ be a normed vector space with norm $\mynor$, and let $B$ be a self-adjoint operator on $E$.
Then, we have the following results, depending on the assumptions on $E$ and on $B$:
\begin{enumerate}
    \item In general, we have $\frac{1}{2} \lVert B\rVert\leq \lVert Q_B\rVert\leq \lVert B\rVert$.
    \item If $\mynor$ is Euclidean, then $\lVert Q_B\rVert=\lVert B\rVert$.
    \item If $B$ is positive semidefinite%
    \footnote{That is, $\langle Bx,x\rangle\geq0$ for every $x\in E$.},
    then $\lVert Q_B\rVert=\lVert B\rVert$.
\end{enumerate}\vskip0pt
\end{proposition}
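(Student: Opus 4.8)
The plan is to read the three assertions as a single application of Corollary~\ref{cor-lip-bound} to the quadratic function $f = Q_B$. First I would record that $f$ is differentiable with $f'(x) = 2Bx \in E^*$: using self-adjointness of $B$,
\[
f(x+h) = \langle B(x+h),x+h\rangle = Q_B(x) + 2\langle Bx,h\rangle + Q_B(h),
\]
so that $\langle f'(x),h\rangle = 2\langle Bx,h\rangle$, and hence, for all $x,y\in E$,
\[
f(y) - f(x) - \langle f'(x),y-x\rangle = Q_B(y-x).
\]

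Next I would identify the two parameters $L_f$ and $M_f$ attached to $f$. Since $E$ is finite-dimensional, its unit sphere is compact and $Q_B$, $B$ are continuous, so $\lVert Q_B\rVert$ and $\lVert B\rVert$ are finite. Because $Q_B$ is homogeneous of degree two and $v\mapsto v/\lVert v\rVert$ maps $E\setminus\{0\}$ onto the unit sphere, the displayed identity gives
\[
\sup_{x\neq y}\frac{\lvert f(y) - f(x) - \langle f'(x),y-x\rangle\rvert}{\lVert y-x\rVert^2} = \sup_{v\neq0}\frac{\lvert Q_B(v)\rvert}{\lVert v\rVert^2} = \lVert Q_B\rVert < \infty,
\]
so $f$ is $1$-approximable with $L_f = 2\lVert Q_B\rVert$. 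Likewise $\lVert f'(x)-f'(y)\rVert_* = 2\lVert B(x-y)\rVert_*$, and the same homogeneity/compactness argument (now for the linear map $B$) yields $M_f = 2\lVert B\rVert$. Under this dictionary the three claims of the proposition become exactly $\tfrac12 M_f\le L_f\le M_f$; $L_f = M_f$ when $\mynor$ is Euclidean; and $L_f = M_f$ when $B$ is positive semidefinite.

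Finally I would invoke Corollary~\ref{cor-lip-bound}. Its first item, $\tfrac12 M_f\le L_f\le M_f$, is item~1 here after substituting $M_f = 2\lVert B\rVert$ and $L_f = 2\lVert Q_B\rVert$; its second item gives item~2 here verbatim. For item~3, I note that positive semidefiniteness of $B$ makes $f = Q_B$ convex, because the displayed identity shows $f(y) - f(x) - \langle f'(x),y-x\rangle = \langle B(y-x),y-x\rangle\ge0$, which is the first-order characterization of convexity for a differentiable function; then the third item of Corollary~\ref{cor-lip-bound} (equivalently, Corollary~\ref{cor-convex-case}) gives $L_f = M_f$, i.e.\ $\lVert Q_B\rVert = \lVert B\rVert$.

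The argument is essentially a change of language, so I do not expect a genuine obstacle; the only points deserving care are the formula $f'(x) = 2Bx$ (which uses self-adjointness of $B$) and the reduction of the suprema over pairs $(x,y)$ to maxima over the unit sphere, which relies on the degree-two homogeneity of $Q_B$, the linearity of $B$, and compactness of the sphere in the finite-dimensional space $E$.
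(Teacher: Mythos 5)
Your argument is correct and is essentially the paper's own proof: both reduce the three claims to Corollary~\ref{cor-lip-bound} applied to a quadratic function, using the identity $f(y)-f(x)-\langle f'(x),y-x\rangle = Q_B(y-x)$ (the paper works with $f=\tfrac12 Q_B$ so that $L_f=\lVert Q_B\rVert$ and $M_f=\lVert B\rVert$ without the factor of $2$, but this normalization is immaterial). The identification of $L_f$ and $M_f$ and the convexity argument for item~3 match the paper's reasoning.
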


\begin{proof}
Let $f(x) = \frac{1}{2} Q_B(x) = \frac{1}{2} \langle Bx,x\rangle$.
The gradient of $f$ at $x$ is $f'(x)=Bx\in E^*$.
Let $x,y$ be in $E$.
The difference between $f(y)$ and its first-order approximation $f(x) + \langle f'(x),y-x\rangle$ is given by
\begin{align*}
\frac{1}{2}\langle By,y\rangle - \frac{1}{2}\langle Bx,x\rangle - \langle Bx,y-x\rangle &= \frac{1}{2}\langle By,y\rangle + \frac{1}{2}\langle Bx,x\rangle - \langle Bx,y\rangle \\
&= \frac{1}{2}\langle B(y-x),y-x\rangle \\
&= \frac{1}{2} Q_B(y-x) .
\end{align*}

Hence, $f$ is $1$-approximable, and $L_f=\lVert Q_B\rVert$.
It is also not hard to see that $f$ has Lipschitz continuous gradient, and $M_f=\lVert B\rVert$.
Thus, we obtain the first and second items of the proposition, from the first and second items of Corollary~\ref{cor-lip-bound}.
Finally, $f$ is convex, if and only if $B$ is positive semidefinite.
Hence, we get the desired results from the third item of Corollary~\ref{cor-lip-bound}.~\qed
\end{proof}

The second item in Proposition~\ref{pro-bilin-bounds-various} states that, when the norm on $E$ is Euclidean, the two quantities $\lVert B\rVert$ and $\lVert Q_B\rVert$ coincide for every $B$.
The following theorem shows that $\lVert Q_B\rVert=\lVert B\rVert$ for every self-adjoint operator $B$ on a normed vector space $E$, only if the norm of $E$ is Euclidean.%
\footnote{The result presented in Theorem~\ref{thm-eucl-bilin} seems to be a novel (to the best of the authors' knowledge) characterization of Euclidean norms in the finite-dimensional case.
(For a detailed survey of results on equivalent characterizations of Euclidean norms, we refer the reader to the celebrated book by Amir \cite{amir1986characterizations}.)}
(The proof of the theorem is presented in the ``Appendix.'')

\begin{theorem}\label{thm-eucl-bilin}
Let $(E,\mynor)$ be a finite-dimensional normed vector space.
Then, $\mynor$ is Euclidean, if and only if $\lVert Q_B\rVert=\lVert B\rVert$ for every self-adjoint operator $B$ on $E$.
\end{theorem}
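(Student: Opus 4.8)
The plan is to prove the nontrivial implication: if $\lVert Q_B\rVert=\lVert B\rVert$ for every self-adjoint operator $B$ on $E$, then $\mynor$ is Euclidean. By the Jordan--von Neumann theorem, it suffices to verify the parallelogram law, and for that it is enough to work in every two-dimensional subspace of $E$; so I may assume $\dim E=2$ from the start (a self-adjoint operator on a subspace extends trivially, and the norms involved only decrease when restricting, so the hypothesis passes to subspaces after a short argument). The idea is to exploit the freedom in choosing $B$: the hypothesis $\lVert Q_B\rVert = \lVert B\rVert$ must hold in particular for all rank-$2$ self-adjoint operators of the form $Q_B(x) = \langle \varphi,x\rangle^2 - \langle \psi,x\rangle^2$ for $\varphi,\psi\in E^*$, which are exactly the "hyperbolic" quadratic forms appearing in Example~\ref{exa-infinite}. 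For such a $B$, the value $\lVert Q_B\rVert$ is $\max\{\,\lvert\langle\varphi,x\rangle^2-\langle\psi,x\rangle^2\rvert : \lVert x\rVert=1\,\}$, while $\lVert B\rVert$ involves the operator $Bx$ acting as $\langle\varphi,x\rangle\varphi - \langle\psi,x\rangle\psi$, so $\lVert Bx\rVert_* = \max_{\lVert y\rVert=1}(\langle\varphi,x\rangle\langle\varphi,y\rangle - \langle\psi,x\rangle\langle\psi,y\rangle)$. The equality of these two quantities for all choices of $\varphi,\psi$ is a strong rigidity condition on the unit ball of $(E,\mynor)$.

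The key steps, in order, are as follows. First, reduce to $\dim E=2$ and fix a linear coordinate system $E\cong\RRb^2$ so that $E^*\cong\RRb^2$ as well; write the unit ball $K=\{x:\lVert x\rVert\le 1\}$ and the dual unit ball $K^\circ$. Second, take $\psi$ to be a fixed functional attaining its dual norm at a boundary point $x_0$ of $K$ (so $\langle\psi,x_0\rangle=\lVert\psi\rVert_*$ with $\lVert x_0\rVert=1$), and let $\varphi$ range. Third, choose the pair $(\varphi,\psi)$ cleverly so that the "off-diagonal" contributions force the unit ball to be symmetric under a reflection: concretely, pick $\varphi,\psi$ so that $\lVert Q_B\rVert$ is attained at two distinct points $x_1,x_2$ of the unit sphere with $\langle\varphi,x_1\rangle = -\langle\psi, x_1\rangle$-type relations, and compare with the bilinear quantity $\lVert B\rVert$, which is attained by optimizing over a second vector $y$; equality $\lVert Q_B\rVert=\lVert B\rVert$ then forces $y$ to be expressible in terms of $x$, which pins down a self-duality/ellipse condition. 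Fourth, conclude that all "John-type" extremal configurations of $K$ behave exactly as they would for an ellipse, and invoke a standard characterization (e.g. that a centrally symmetric convex body in the plane all of whose suitable affine images are self-polar is an ellipse) to conclude $K$ is an ellipse, i.e. $\mynor$ is Euclidean.

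I expect the main obstacle to be step three: converting the scalar identity $\lVert Q_B\rVert=\lVert B\rVert$, which a priori only relates two maxima, into a genuine pointwise/geometric constraint on the unit ball. The gap is that $\lVert B\rVert\ge\lVert Q_B\rVert$ always, so the hypothesis says the bilinear maximum is never strictly larger than the quadratic maximum; the work is to show that a non-Euclidean $K$ admits some hyperbolic $B$ for which one can strictly increase the bilinear form by moving $y$ off the diagonal $y=x$, beating $\lVert Q_B\rVert$. A clean way to do this is by contradiction: assume $K$ is not an ellipse, so by a theorem of the Auerbach/symmetry type (or directly from the parallelogram law failing) there exist unit vectors $u,v$ with $\lVert u+v\rVert^2+\lVert u-v\rVert^2\neq 2\lVert u\rVert^2+2\lVert v\rVert^2$; then pick $\varphi,\psi$ dual to $u,v$ appropriately and exhibit, by an explicit two-variable optimization, a point $(x,y)$ witnessing $\lVert B\rVert > \lVert Q_B\rVert$, contradicting the hypothesis. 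The remaining steps are then routine: the reduction to dimension two and the final invocation of Jordan--von Neumann are standard, and the rank-$2$ remark at the end of Section~\ref{sec-corollaries} tells us exactly which family of $B$'s to test, namely the hyperbolic quadratic forms, which matches the construction in Example~\ref{exa-infinite} and strongly suggests this is the intended route.
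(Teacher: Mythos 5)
You have correctly identified the family of test operators --- the rank-$2$ ``hyperbolic'' forms $Q_B(x)=\langle\varphi,x\rangle^2-\langle\psi,x\rangle^2$ --- which is indeed what the paper uses. However, there are two genuine gaps. First, your opening reduction to $\dim E=2$ is not the ``short argument'' you claim. To transfer the hypothesis $\lVert Q_B\rVert=\lVert B\rVert$ from $E$ to a two-dimensional subspace $F$, you would extend a self-adjoint $B_0:F\to F^*$ to $E$ via a projection $P$ onto $F$, setting $\langle Bx,y\rangle=\langle B_0Px,Py\rangle$; but then $\lVert B\rVert_E$ and $\lVert Q_B\rVert_E$ are inflated by factors controlled by $\lVert P\rVert$, and equality on $E$ does not pass to equality on $F$ unless $\lVert P\rVert=1$. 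Norm-one projections onto arbitrary subspaces need not exist in a non-Euclidean space --- indeed, their existence onto all hyperplanes (Kakutani) already characterizes Euclidean norms in dimension at least three --- so this reduction is close to circular. The paper avoids it entirely: it works in $\RRb^n$ throughout, and its ``two-dimensional'' step is only the choice of two distinguished directions, not a restriction of the norm to a subspace.

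Second, and more importantly, the heart of the argument --- producing from non-Euclideanness a concrete pair $(\varphi,\psi)$ with $\lVert B\rVert>\lVert Q_B\rVert$ --- is exactly the step you leave as ``pick $\varphi,\psi$ appropriately and exhibit, by an explicit two-variable optimization, a witness.'' Starting from a mere violation of the parallelogram law for some $u,v$ does not obviously yield such a witness. The paper's construction needs an auxiliary Euclidean structure to make it work: it passes to the L{\"o}wner--John ellipsoid of the unit ball, normalizes it to $\BBb^n$, and then uses the Fritz John optimality conditions to extract two non-colinear contact points $u,v\in\SSb^{n-1}\cap K'$ whose normalized midpoint $e_1=(u+v)/\lVert u+v\rVert_2$ lies \emph{outside} $K'$. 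Only then does the hyperbolic form $\langle \BBt x,y\rangle=\kappa^{-2}(e_1^\top x)(e_1^\top y)-(e_2^\top x)(e_2^\top y)$, with $\kappa=\max\{|e_1^\top x|:x\in K'\}<1$, satisfy $\lVert Q_{\BBt}\rVert\le 1<\langle \BBt u,v\rangle\le\lVert \BBt\rVert$; the strict inequality relies on $u,v$ lying on the John sphere (so that $\tfrac14\lVert u+v\rVert_2^2+\tfrac14\lVert u-v\rVert_2^2=1$) and on the scaling $\kappa^{-2}$, neither of which your sketch supplies. As written, the proposal is a plausible plan whose two load-bearing steps are missing.
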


\begin{remark}
From the proof of Theorem~\ref{thm-eucl-bilin} (see the ``Appendix''), we can obtain a stronger version of this theorem: $\mynor$ is Euclidean, if and only if $\lVert Q_B\rVert=\lVert B\rVert$ for every self-adjoint operator $B$ on $E$ \emph{with rank $2$}, that is, for every $B$ that can be expressed as $Bx = \langle\varphi,x\rangle\varphi \pm \langle\psi,x\rangle\psi$ for some $\varphi,\psi\in E^*$.
\end{remark}

Finally, Theorem~\ref{thm-eucl-bilin} allows us to answer the question in the last paragraph of the previous section:

\begin{corollary}
Let $(E,\mynor)$ be a finite-dimensional normed vector space, and suppose that, for every real-valued function $f$ on $E$, we have $L_f=M_f$ (using the notation of Corollary~\ref{cor-lip-bound}).
Then, $\mynor$ is Euclidean.
\end{corollary}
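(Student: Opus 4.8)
The plan is to reduce the statement to Theorem~\ref{thm-eucl-bilin} by restricting the hypothesis to quadratic functions. First I would apply the assumption to the specific family of functions $f(x) = \tfrac{1}{2} Q_B(x) = \tfrac{1}{2}\langle Bx,x\rangle$, where $B$ ranges over all self-adjoint operators on $E$. By the computation carried out in the proof of Proposition~\ref{pro-bilin-bounds-various}, each such $f$ is $1$-approximable with $L_f = \lVert Q_B\rVert$, and has Lipschitz continuous gradient with $M_f = \lVert B\rVert$.

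Next, since by hypothesis $L_f = M_f$ for \emph{every} real-valued function $f$ on $E$, it holds in particular for each of these quadratic functions, so $\lVert Q_B\rVert = \lVert B\rVert$ for every self-adjoint operator $B$ on $E$. Applying the ``only if'' direction of Theorem~\ref{thm-eucl-bilin}, we conclude that $\mynor$ is Euclidean.

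I do not expect a genuine obstacle here: the corollary is essentially the specialization of the hypothesis to quadratic functions, combined with Theorem~\ref{thm-eucl-bilin}. All the substance lives in the proof of that theorem (given in the Appendix), which is where the delicate step—constructing a self-adjoint operator $B$ with $\lVert Q_B\rVert < \lVert B\rVert$ out of a failure of the parallelogram law—would be carried out. One minor point to be careful about is matching the hypothesis of Theorem~\ref{thm-eucl-bilin} exactly: it requires $\lVert Q_B\rVert = \lVert B\rVert$ for \emph{all} self-adjoint $B$, which is precisely what we derived; in fact, by the Remark following that theorem, it would suffice to have equality only for the rank-$2$ operators, so one could equivalently weaken the hypothesis of the corollary to concern only quadratic functions of rank $2$.
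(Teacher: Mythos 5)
Your proposal is correct and follows essentially the same route as the paper: restrict the hypothesis to the quadratic functions $f(x)=\tfrac{1}{2}\langle Bx,x\rangle$, read off $L_f=\lVert Q_B\rVert$ and $M_f=\lVert B\rVert$ from the computation in Proposition~\ref{pro-bilin-bounds-various}, and invoke Theorem~\ref{thm-eucl-bilin}. The only quibble is a labeling slip: the direction of Theorem~\ref{thm-eucl-bilin} you need (``$\lVert Q_B\rVert=\lVert B\rVert$ for all $B$ implies $\mynor$ Euclidean'') is what the paper calls the ``if'' part, not the ``only if'' part.
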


\begin{proof}
If $L_f=M_f$ for every function $f$ on $E$, this is also true for every quadratic function $f(x)=Q_B^{}(x)=\langle Bx,x\rangle$, where $B$ is any self-adjoint operator on $E$.
Hence, using similar developments as in the proof of Proposition~\ref{pro-bilin-bounds-various}, we get that $\lVert Q_B^{}\rVert=\lVert B\rVert$ for every self-adjoint operator $B$ on $E$.
Thus, Theorem~\ref{thm-eucl-bilin} implies that $\mynor$ must be Euclidean.~\qed
\end{proof}

\section{Conclusions}

We have shown that \HOL{} continuity of the gradient is a sufficient and \emph{necessary} condition for a function to have a global upper bound on the error of its first-order Taylor approximation.
We established the link between the parameter appearing in the global upper bound on the error of the first-order Taylor approximation and the \HOL{} constant of the gradient: This takes the form of an interval, depending on the \HOL{} constant, in which the approximation parameter is guaranteed to be.

For the Lipschitz case, an example shows that the interval cannot be shortened.
On top of this, tighter bounds can be obtained, if we further assume that the function is convex, or if the underlying norm is Euclidean.
In particular, for the Lipschitz case, if the norm is Euclidean, then the interval reduces to a single point, and Theorem~\ref{thm-eucl-bilin} shows that the assumption that the norm is Euclidean is \emph{not} conservative.
We have not addressed in this paper the question of whether the interval for the general case (i.e., with \HOL{} exponent different than one) is tight or not.
We leave it for further work.

Another follow-up topic is a generalization of Theorem~\ref{thm-holder-general} to higher-order derivatives and higher-order Taylor approximations.
Those play central roles in higher-order optimization methods.
By applying a similar argument as in the proof of Theorem~\ref{thm-holder-general}, one can obtain an interval, depending on the \HOL{} constant of the derivative of a given order, in which the approximation parameter of the Taylor approximation (of order equal to the one of the derivatives) is guaranteed to be.
Drawing upon this observation, the following aspects still need to be addressed:
Are the obtained bounds tight?
Can we obtain better bounds, if we further assume that the norm is Euclidean?
We plan to study these questions in further work.

\begin{acknowledgements}
\singlespacing
This work was supported by (i)~the Fonds de la Recherche Scientifique -- FNRS and the Fonds Wetenschappelijk Onderzoek -- Vlaanderen under EOS Project no 30468160, (ii)~``Communaut\'e fran\c{c}aise de Belgique -- Actions de Recherche Concert\'ees'' (contract ARC 14/19-060).
The research of the first author was supported by a FNRS/FRIA grant.
The research of the third author was supported by the FNRS, the Walloon Region and the Innoviris Foundation.
The research of the fourth author was supported by ERC Advanced Grant 788368.
\end{acknowledgements}

\appendix  
\section*{Appendix: Proof of Theorem~\ref{thm-eucl-bilin}}
\stepcounter{section}

The proof relies on the fact that, if $\mynor$ is not Euclidean, then the \emph{unit ball} defined by $\mynor$, i.e., $\{x\in E:\lVert x\rVert\allowbreak\leq1\}$, is not equal to the ellipsoid with smallest volume containing this ball.
Based on this ellipsoid, we will build a self-adjoint operator $B:E\to E^*$, such that $\lVert Q_B\rVert<\lVert B\rVert$.
The notions of ellipsoid and (Lebesgue) volume are defined on $\RRb^n$ only.
The following lemma implies, among other things, that there is no loss of generality in restricting to the case $E=\RRb^n$:

\begin{lemma}\label{lem-2dim-bijec-corr}
Let $E$ be a real vector space with norm $\mynor$, and let $A:E \rightarrow E'$ be a bijective linear map.
Then, $\mynor$ is Euclidean, if and only if the norm $\mynor'$ on $E'$, defined by $\lVert x\rVert' = \lVert A^{-1}x\rVert$, is Euclidean.
\end{lemma}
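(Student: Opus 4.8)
The plan is to reduce everything to the Jordan--von Neumann theorem recalled in Section~\ref{sec-definitions}: a norm is Euclidean exactly when it obeys the parallelogram law. Since $A$ is linear and bijective, this law transports verbatim between $\mynor$ and $\mynor'$, so the equivalence will be immediate once we set up the correspondence.

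First I would record that the statement is symmetric in $E$ and $E'$. Indeed, $A^{-1}\colon E'\to E$ is again a bijective linear map, and for every $y\in E$ one has $\lVert y\rVert=\lVert A^{-1}(Ay)\rVert=\lVert Ay\rVert'$; hence $\mynor$ is obtained from $\mynor'$ by exactly the same pullback construction, with $A^{-1}$ playing the role of $A$. It therefore suffices to prove a single implication, say: if $\mynor$ is Euclidean then so is $\mynor'$.

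So, assume $\mynor$ is Euclidean, fix arbitrary $u,v\in E'$, and put $p=A^{-1}u$ and $q=A^{-1}v$. Using the linearity of $A^{-1}$, the definition $\lVert\,\cdot\,\rVert'=\lVert A^{-1}(\,\cdot\,)\rVert$, and the parallelogram law for $\mynor$,
\begin{align*}
\left(\lVert u+v\rVert'\right)^2+\left(\lVert u-v\rVert'\right)^2
&= \left(\lVert p+q\rVert\right)^2+\left(\lVert p-q\rVert\right)^2 \\
&= 2\left(\lVert p\rVert\right)^2+2\left(\lVert q\rVert\right)^2 \\
&= 2\left(\lVert u\rVert'\right)^2+2\left(\lVert v\rVert'\right)^2 .
\end{align*}
Thus $\mynor'$ satisfies the parallelogram law for every pair $u,v\in E'$, so by Jordan--von Neumann it is Euclidean; combined with the symmetry observation, this proves the lemma. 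An equivalent route, which I would mention only in passing, is to write $\mynor=\sqrt{\langle\cdot,\cdot\rangle}$ for a scalar product on $E$, define $\langle u,v\rangle'\coloneqq\langle A^{-1}u,A^{-1}v\rangle$, and check directly that $\langle\cdot,\cdot\rangle'$ is a scalar product inducing $\mynor'$: bilinearity and symmetry are clear, and positive-definiteness is where the injectivity of $A^{-1}$ is used.

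There is essentially no obstacle here. The only point worth stating explicitly is that ``Euclidean'' is a condition quantified over \emph{all} pairs of vectors; the computation above holds for an arbitrary pair $(u,v)\in E'\times E'$, so it does establish the parallelogram law on all of $E'$, and bijectivity of $A$ enters only to make $A^{-1}$ a well-defined linear map --- which is in turn what makes $\mynor'$ a genuine norm in the first place.
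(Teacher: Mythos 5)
Your proof is correct. The paper disposes of this lemma in one sentence by transporting the inducing scalar product itself: it invokes the characterization that $\mynor$ is Euclidean iff there is a self-adjoint operator $H:E\to E^*$ with $\lVert x\rVert^2=\langle Hx,x\rangle$, and the pullback of $H$ along $A^{-1}$ does the job. Your primary route instead goes through the Jordan--von Neumann parallelogram-law characterization (which the paper states in Section~\ref{sec-definitions}), checking that the law transports verbatim under the linear bijection. The two arguments are equally elementary; the parallelogram route has the small advantage of requiring no verification that the pulled-back bilinear form is positive definite, whereas the scalar-product route (which you correctly sketch as your alternative, noting that injectivity of $A^{-1}$ is what gives positive-definiteness) matches the paper's stated definition more directly. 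Your symmetry observation reducing the equivalence to a single implication is a clean touch that the paper leaves implicit.
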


\begin{proof}
Straightforward from the definition of $\mynor$ being Euclidean, if and only if it is induced by a scalar product, i.e., if and only if there exists a self-adjoint operator $H:E\to E^*$, satisfying $\lVert x\rVert^2=\langle Hx,x\rangle$ for all $x\in E$.~\qed
\end{proof}

\begingroup%
\def\proofname{Proof of Theorem~\ref{thm-eucl-bilin}}%
\begin{proof}
The ``only if'' part follows from Proposition~\ref{pro-bilin-bounds-various}.
For the proof of the ``if'' part, let $E$ be an $n$-dimensional vector space, and let $\mynor$ be a non-Euclidean norm on $E$.
We will build a self-adjoint operator $B$ on $E$, such that $\lVert Q_B\rVert<\lVert B\rVert$.

By Lemma~\ref{lem-2dim-bijec-corr}, we may assume that $E=\RRb^n$ and that $\mynor$ is a non-Euclidean norm on $\RRb^n$.
We use superscripts to denote the components of vectors in $\RRb^n$: $x=(x^{(1)},\ldots,x^{(n)})^\top$.

Let $K=\{ x\in\RRb^n : \lVert x\rVert \leq 1 \}$.
Because $K$ is compact, convex, with non-empty interior, and symmetric with respect to the origin, the L{\"o}wner--John ellipsoid theorem \cite{john1948extremum,ball1992ellipsoids} asserts that there exists a unique ellipsoid $\EEc$, with minimal volume, and such that $K\subseteq\EEc$.
Moreover, $\EEc$ is centered at the origin, and $K$ has $n$ linearly independent vectors on the boundary of $\EEc$.

Let $L:\RRb^n\to\RRb^n$ be a linear isomorphism, such that $L\EEc$ is the Euclidean ball $\BBb^n=\{x\in\RRb^n : \lVert x\rVert_2\leq1\}$, where $\lVert x\rVert_2=\sqrt{x^\top x}$ is the canonical Euclidean norm on $\RRb^n$.
Let $\lVert x\rVert'= \lVert L^{-1}x\rVert$, and let $K'=\{ x\in\RRb^n : \lVert x\rVert' \leq 1 \}$.
By Lemma~\ref{lem-2dim-bijec-corr}, $\mynor'$ is not Euclidean.
Since $K'=LK$, it is clear that $K'$ is compact, convex, with non-empty interior, and symmetric with respect to the origin.
Moreover, $K'$ is included in $\BBb^n$, and it has $n$ linearly independent vectors on the boundary $\SSb^{n-1}$ of $\BBb^n$.

We will need the following lemma to conclude the proof of Theorem~\ref{thm-eucl-bilin}:

\begin{lemma}\label{lem-two-dim-reduction}
There exist $u,v\in\SSb^{n-1}\cap K'$, not colinear, such that $\frac{u+v}{\lVert u+v\rVert_2}\notin K'$.
\end{lemma}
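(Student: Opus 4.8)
The plan is to argue by contradiction. Suppose that for \emph{every} non-colinear pair $u,v\in\SSb^{n-1}\cap K'$ one has $\frac{u+v}{\lVert u+v\rVert_2}\in K'$. Since $\frac{u+v}{\lVert u+v\rVert_2}$ always lies on $\SSb^{n-1}$, this says precisely that the contact set $C\coloneqq\SSb^{n-1}\cap K'$ is closed under the partial operation $(u,v)\mapsto\frac{u+v}{\lVert u+v\rVert_2}$, defined whenever $u,v$ are not colinear. From the construction, $C$ is compact (an intersection of two compact sets), symmetric ($C=-C$, as both $\SSb^{n-1}$ and $K'$ are), and spans $\RRb^n$ (it contains the $n$ linearly independent boundary vectors furnished above by the minimal-volume ellipsoid); also $n\ge2$, since every norm on a line is Euclidean while $\mynor'$ is assumed not to be. I claim that these four properties force $C=\SSb^{n-1}$, hence $K'\supseteq\SSb^{n-1}$, hence $K'=\BBb^n$ (a convex set containing the sphere contains the ball), hence $\mynor'$ is Euclidean --- contradicting the hypothesis on $\mynor'$. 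The pair $u,v$ that this contradiction produces automatically lies in $\SSb^{n-1}\cap K'$, which is exactly what the lemma asks for.

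So the heart of the matter is the claim: a compact, symmetric $C\subseteq\SSb^{n-1}$ that spans $\RRb^n$ and is closed under normalized midpoints of non-colinear pairs must equal $\SSb^{n-1}$. I would first dispose of the planar case. Pick non-colinear $u_0,v_0\in C$ and let $\theta\in(0,\pi)$ be the angle between them; the normalized midpoint of two unit vectors is their angular bisector, so iterating the midpoint operation on the (always non-colinear, because they live in an arc of length $<\pi$) pairs that arise yields points at all dyadic fractions of the angle from $u_0$ to $v_0$. These are dense in the minor arc joining $u_0$ to $v_0$, so by compactness that whole arc lies in $C$. Applying the same reasoning to the pair $(v_0,-u_0)$ --- non-colinear, with $-u_0\in C$ by symmetry --- puts the minor arc from $v_0$ to $-u_0$ in $C$ as well; together these two arcs cover a closed semicircle, and symmetry then gives the full circle.

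For general $n$ I would propagate the planar case through subspaces. If $P$ is the plane spanned by two elements of $C$, then $C\cap P$ is compact, symmetric inside $P$, contains two non-colinear vectors, and is still closed under normalized midpoints (the midpoint of two vectors of $P$ stays in $P$); the planar case gives $C\cap P=\SSb^{n-1}\cap P$, the whole unit circle of $P$. Now take linearly independent $w_1,\dots,w_n\in C$ and prove by induction on $k$ that $\SSb^{n-1}\cap\operatorname{span}(w_1,\dots,w_k)\subseteq C$: in the step from $k$ to $k+1$, an arbitrary unit vector of $\operatorname{span}(w_1,\dots,w_{k+1})$ that is not already a multiple of $w_{k+1}$ can be written as a unit vector of $\operatorname{span}(u,w_{k+1})$ for a suitable unit vector $u\in\operatorname{span}(w_1,\dots,w_k)\subseteq C$, and then the planar fact for that plane applies. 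At $k=n$ this yields $C=\SSb^{n-1}$.

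The main obstacle is the planar case: rigorously checking that the midpoint iteration never leaves the class of non-colinear pairs and sweeps out a dense subset of an arc, and that the arcs obtained from $(u_0,v_0)$ and $(v_0,-u_0)$ indeed patch together (together with symmetry) into the entire circle; the reduction from $n$ dimensions to two is then routine bookkeeping. The only other point worth a sentence is the harmless observation that $n\ge2$, which is automatic since a one-dimensional norm is Euclidean whereas $\mynor'$ is assumed not to be.
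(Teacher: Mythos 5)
Your argument is correct, and it takes a genuinely different route from the paper's. The paper obtains the pair $u,v$ by solving the optimization problem of maximizing $\lVert x\rVert_2^2$ subject to $x^\top y\leq1$ for all $y\in\SSb^{n-1}\cap K'$: it shows the optimal value exceeds $1$ (using $K'\neq\BBb^n$), invokes the Fritz John necessary conditions to extract at least two non-colinear contact points $u,v$ with $\xopt^\top u=\xopt^\top v=1$, and then observes that $\xopt^\top\frac{u+v}{\lVert u+v\rVert_2}=2/\lVert u+v\rVert_2>1$, which excludes the normalized midpoint from $K'$. Your contrapositive instead shows that if the contact set $C=\SSb^{n-1}\cap K'$ were closed under normalized midpoints, the angular-bisection iteration (dyadic density on the minor arc, plus the pair $(v_0,-u_0)$ and central symmetry to complete the circle, plus the induction through the planes $\operatorname{span}(u,w_{k+1})$) would force $C=\SSb^{n-1}$, hence $K'=\BBb^n$ by convexity and the inclusion $K'\subseteq\BBb^n$, contradicting non-Euclideanity of $\mynor'$. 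Each step you flag as needing care does go through: distinct unit vectors at angle in $(0,\pi)$ are never colinear, so the bisection stays within the admissible pairs, and the two arcs patch into a closed semicircle. What the paper's route buys is an explicit separating functional $\xopt$ certifying the witness pair directly, at the price of importing the L\"owner--John optimality machinery; what yours buys is elementarity---only compactness, symmetry, and convexity are used beyond the existence of $n$ independent contact points---at the price of a longer, though routine, density-and-induction bookkeeping. Both proofs rely on the same two inputs from the surrounding construction: that $\SSb^{n-1}\cap K'$ contains $n$ linearly independent vectors, and that $K'\neq\BBb^n$.
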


We proceed with the proof of Theorem~\ref{thm-eucl-bilin} (a proof of Lemma~\ref{lem-two-dim-reduction} is provided at the end of this appendix).
Let $u,v$ be as in Lemma~\ref{lem-two-dim-reduction}, and define $e_1 = \frac{u+v}{\lVert u+v\rVert_2}$ and $e_2=\frac{u-v}{\lVert u-v\rVert_2}$.
Note that these vectors are orthonormal (w.r.t.~the inner product $x^\top y$).

Let $\kappa= \max\, \{ \lvert e_1^\top x\rvert : x\in K' \}$.
Since $\lvert e_1^\top x\rvert<1$ for every $x\in\BBb^n\setminus\{\pm e_1\}$, and $\pm e_1\notin K'$, we have that $\kappa<1$.
Moreover, $\kappa>0$, since $\mathrm{int}(K')\neq\varnothing$.
Let $\BBt$ be the self-adjoint operator on $\RRb^n$, defined by
\[
\langle \BBt x,y\rangle = \frac{1}{\kappa^2}\, (e_1^\top x)(e_1^\top y) - (e_2^\top x)(e_2^\top y)
\]
for every $x,y\in\RRb^n$.
Let $x\in K'$.
Then,
\[
-1\leq - (e_2^\top x)^2 \leq \langle \BBt x,x\rangle \leq \frac{1}{\kappa^2}\,(e_1^\top x)^2 \leq \frac{1}{\kappa^2}\,\kappa^2 = 1 .
\]
It follows that, for every $x\in\RRb^n$ with $x\neq0$, $\lvert \langle \BBt x,x\rangle\rvert= \lVert x\rVert'^2\,\lvert \langle \BBt \frac{x}{\lVert x\rVert'},\frac{x}{\lVert x\rVert'}\rangle\rvert\leq\lVert x\rVert'^2$.
Hence, $\lVert Q_{\BBt}\rVert\leq1$.
Now, we will show that $\lvert \langle \BBt u,v\rangle\rvert>\lVert u\rVert'\lVert v\rVert'$ (where $u,v$ are as above).
Therefore, let $\alpha=\lVert u+v\rVert_2$ and $\beta=\lVert u-v\rVert_2$.
Observe that $u=\frac{\alpha e_1+\beta e_2}{2}$ and $v=\frac{\alpha e_1-\beta e_2}{2}$.
Thus,
\[
\langle \BBt u,v\rangle = \frac{1}{\kappa^2}\,\frac{\alpha^2}{4} + \frac{\beta^2}{4} = \frac{1-\kappa^2}{\kappa^2}\,\frac{\alpha^2}{4} + \frac{\alpha^2}{4} + \frac{\beta^2}{4} .
\]
This shows that $\langle \BBt u,v\rangle>1$, since (by the parallelogram identity)
\[
\frac{\alpha^2}{4} + \frac{\beta^2}{4} = \frac{1}{4}\left( \lVert u+v\rVert_2^2 + \lVert u-v\rVert_2^2 \right) = 1 ,
\]
$0<\kappa<1$, and $\alpha>0$.
Since $u,v\in K'$ (i.e., $\lVert u\rVert',\lVert v\rVert'\leq1$), we have that $\lVert u\rVert'\lVert v\rVert'\leq1<\lvert \langle \BBt u,v\rangle\rvert$.
Thus, $\lVert\BBt\rVert>1$.

Finally, define the self-adjoint operator $B$ on $E$ by $\langle Bx,y\rangle = \langle\BBt Lx,Ly\rangle$.
It is clear, from the definition of $\mynor'$, that $\lvert \langle Bx,x\rangle\rvert\leq\lVert x\rVert^2$ for every $x\in E$ and $\lvert \langle Bx,y\rangle\rvert>\lVert x\rVert\lVert y\rVert$ for $x=L^{-1}u$ and $y=L^{-1}v$ (where $u,v$ are as above).
Hence, one gets $\lVert Q_B\rVert\leq1<\lVert B\rVert$.
This concludes the proof of Theorem~\ref{thm-eucl-bilin}.~\qed
\end{proof}
\endgroup

It remains to prove Lemma~\ref{lem-two-dim-reduction}.
The following proposition, known as \emph{Fritz John necessary conditions for optimality} will be useful in the proof of Lemma~\ref{lem-two-dim-reduction}:

\begin{proposition}[Fritz John necessary conditions \cite{john1948extremum}]\label{pro-fritz}
Let $S$ be a compact metric space.
Let $F(x)$ be a real-valued function on $\RRb^n$, and let $G(x,y)$ be a real-valued function defined for all $(x,y)\in\RRb^n\times S$.
Assume that $F(x)$ and $G(x,y)$ are both differentiable with respect to $x$ and that $F(x)$, $G(x,y)$, $\frac{\partial F}{\partial x}(x)$, and $\frac{\partial G}{\partial x}(x,y)$ are continuous on $\RRb^n\times S$.
Let $R=\{x\in\RRb^n:G(x,y)\leq0,\,\forall y\in S\}$, and suppose that $R$ is non-empty.

Let $x^*\in R$ be such that $F(x^*)=\max_{x\in R} F(x)$.
Then, there is $m\in\{0,\ldots,n\}$, and points $y_1,\ldots,y_m\in S$, and nonnegative multipliers $\lambda_0,\lambda_1,\ldots,\lambda_m\geq0$, such that (i)~$G(x^*,y_i)=0$ for every $1\leq i\leq m$, (ii)~$\sum_{i=0}^m \lambda_i >0$, and (iii)
\[
\lambda_0\frac{\partial F}{\partial x}(x^*)=\sum_{i=1}^m \lambda_i \frac{\partial G}{\partial x}(x^*,y_i).
\]\vskip0pt
\end{proposition}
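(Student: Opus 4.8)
The plan is to run the classical variational argument behind the Fritz John conditions, in three stages: (a)~translate optimality of $x^*$ into the non-existence of a first-order feasible ascent direction; (b)~dualize this non-existence statement via a separation theorem to produce the multipliers; and (c)~prune the list of active constraints down to at most $n$ with Carath\'eodory's theorem. Throughout, I write $Y^* = \{y\in S : G(x^*,y)=0\}$ for the active set, $g_0 = \frac{\partial F}{\partial x}(x^*)$, and $g_y = \frac{\partial G}{\partial x}(x^*,y)$. Since $S$ is compact and $G$ is continuous, $Y^*$ is a compact subset of $S$, and $y\mapsto g_y$ is continuous on it.

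First I would establish the key claim: there is no direction $d\in\RRb^n$ with $\langle g_0,d\rangle > 0$ and $\langle g_y,d\rangle < 0$ for all $y\in Y^*$. Suppose, for contradiction, that such a $d$ exists. I would then show that $t\mapsto x^*+td$ stays in $R$ and strictly increases $F$ for all sufficiently small $t>0$, contradicting $F(x^*)=\max_{x\in R}F(x)$. The ascent part is immediate from $\langle g_0,d\rangle>0$ and a first-order expansion of $F$. The hard part will be verifying feasibility $G(x^*+td,y)\le0$ \emph{uniformly} over the whole index set $S$, not merely over $Y^*$: constraints that are inactive but nearly active near the boundary of $Y^*$ must be controlled simultaneously. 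I would handle this by a compactness split. By continuity of $g_\cdot$ and compactness of $Y^*$, one has $\langle g_y,d\rangle \le -2\epsilon$ on $Y^*$ for some $\epsilon>0$, so the open set $N=\{y\in S:\langle g_y,d\rangle<-\epsilon\}$ contains $Y^*$; on the compact complement $S\setminus N$ every constraint is strictly inactive, hence $G(x^*,y)\le-\delta<0$ there. Using $G(x^*,y)\le0$ together with a uniform first-order expansion (uniform continuity of $\frac{\partial G}{\partial x}$ on a compact box), one gets $G(x^*+td,y)\le0$ on $N$ for small $t$, while $G(x^*+td,y)\le-\delta+tC\le0$ on $S\setminus N$ once $t\le\delta/C$, where $C$ bounds the directional derivative over the box. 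This closes Stage~(a).

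Next I would dualize. Consider the set $W=\{g_0\}\cup\{-g_y:y\in Y^*\}\subseteq\RRb^n$, which is compact (a continuous image of the compact $Y^*$, plus one point), so $\mathrm{conv}(W)$ is compact and convex. Stage~(a) says precisely that no $d$ satisfies $\langle w,d\rangle>0$ for every $w\in W$; by the separating hyperplane theorem (a Gordan-type alternative), this is equivalent to $0\in\mathrm{conv}(W)$. Writing $0$ as a convex combination of points of $W$ gives $\lambda_0 g_0=\sum_i\lambda_i g_{y_i}$ with all $\lambda\ge0$, not all zero (they sum to $1$), and each $y_i\in Y^*$, i.e.\ $G(x^*,y_i)=0$. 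This already yields conclusions (i)--(iii), except for the cardinality bound.

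Finally, for the bound $m\le n$ I would invoke Carath\'eodory's theorem: $0\in\mathrm{conv}(W)$ with $W\subseteq\RRb^n$ lets us use at most $n+1$ points of $W$. If $g_0$ is among them, at most $n$ of the $-g_{y_i}$ remain and $m\le n$. In the remaining case $\lambda_0=0$, we have a nontrivial nonnegative relation $0=\sum_i\lambda_i g_{y_i}$ among vectors of $\RRb^n$; whenever more than $n$ vectors appear they are linearly dependent, and a standard elimination (shift the $\lambda_i$ along a linear dependence until one vanishes, preserving nonnegativity and nontriviality) reduces the support by one, iterating until $m\le n$. I expect Stage~(a)---the uniform feasibility estimate over the compact index set $S$---to be the main obstacle; the remaining stages are routine convex analysis and linear algebra.
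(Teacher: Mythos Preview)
The paper does not prove this proposition itself; it simply refers the reader to John's original article. Your outline is the classical route, and Stages~(a) and~(b) are correctly sketched: the compactness split for uniform feasibility is exactly the right device, and the Gordan-type alternative indeed yields $0\in\mathrm{conv}(W)$.

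Stage~(c), however, has a genuine gap in the sub-case $\lambda_0=0$. After Carath\'eodory you may be left with $0=\sum_{i=1}^{n+1}\mu_i(-g_{y_i})$, all $\mu_i>0$, where the $n{+}1$ vectors are affinely independent; then the space of linear dependences among the $g_{y_i}$ alone is one-dimensional and spanned by $(\mu_i)$ itself, so your ``shift along a linear dependence'' merely rescales all coefficients simultaneously and cannot zero out one while preserving nontriviality. A one-line instance: $n=1$, $g_{y_1}=-1$, $g_{y_2}=1$, $\mu_1=\mu_2=\tfrac12$; the only dependence is a multiple of $(1,1)$, and shifting by it kills both coordinates at once. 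The remedy is to bring $g_0$ back into the picture: the $n{+}2$ vectors $g_0,-g_{y_1},\ldots,-g_{y_{n+1}}$ in $\RRb^n$ have a dependence space of dimension at least two, so there exists a dependence $(\alpha,\beta_1,\ldots,\beta_{n+1})$ linearly independent from $(0,\mu_1,\ldots,\mu_{n+1})$, necessarily with $\alpha\neq0$ (otherwise it would be a second, independent dependence among the $g_{y_i}$ alone). Normalizing to $\alpha>0$ and replacing $\beta_i$ by $\beta_i+t\mu_i$ with $t=\max_i(-\beta_i/\mu_i)$ makes every entry nonnegative and annihilates at least one $\beta_i$, yielding $\lambda_0=\alpha>0$ and $m\le n$. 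With this patch your argument is complete.
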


We refer the reader to \cite{john1948extremum} for a proof of Proposition~\ref{pro-fritz}.

\begingroup%
\def\proofname{Proof of Lemma~\ref{lem-two-dim-reduction}}%
\begin{proof}
Consider the following optimization problem:
\begin{equation}\label{eq-optimization}
\begin{tabular}{l@{\;\;\;\;}l}
maximize & $\displaystyle F(x)\coloneqq\lVert x\rVert_2^2$ \tabularnewline
subject to & $\displaystyle G(x,y)\coloneqq x^\top y-1\leq0$ \quad for every $y\in\SSb^{n-1}\cap K'$, \rule{-1pt}{5mm}\tabularnewline
\end{tabular}
\end{equation}
with variable $x\in\RRb^n$.

First, we show that \eqref{eq-optimization} is bounded.
Suppose the contrary, and, for every $k\geq1$, let $x_k$ be a feasible solution with $\lVert x_k\rVert_2\geq k$.
Let $\xh_k=x_k/\lVert x_k\rVert_2$.
Taking a subsequence if necessary, we may assume that $\xh_k$ converges to some $\xh_*$, with $\lVert\xh_*\rVert_2=1$.
Since $\xh_k^\top y\leq 1/\lVert x_k\rVert_2$ for every $y\in\SSb^{n-1}\cap K'$, we have that $\xh_*^\top y \leq0$ for every $y\in\SSb^{n-1}\cap K'$.
By symmetry of $\SSb^{n-1}\cap K'$, it follows that $\xh_*^\top y =0$ for every $x\in\SSb^{n-1}\cap K'$, a contradiction with the fact that $\SSb^{n-1}\cap K'$ contains $n$ linearly independent vectors.
Hence, the set of feasible solutions of \eqref{eq-optimization} is bounded, and closed (as the intersection of closed sets), so that \eqref{eq-optimization} has an optimal solution, say $\xopt$.

We will show that $\lVert \xopt\rVert_2>1$.
Therefore, we use the fact that $K'\neq\BBb^n$.%
\footnote{Indeed, a norm $\mynor$ is completely determined by its unit ball $K$, via the identity $\lVert x\rVert = \inf\,\{\alpha>0:x/\alpha\in K\}$.
Since $\mynor'$ is not Euclidean, its unit ball cannot be equal to $\BBb^n$ (since $\BBb^n$ is the unit ball of the canonical Euclidean norm).}
Fix some $z\in\SSb^{n-1}\setminus K'$, and let $\eta=\max\,\{ z^\top y:y\in K' \}$.
Since $z^\top y<1$ for every $y\in\BBb^n\setminus\{z\}$, and $z\notin K'$, we have that $\eta<1$.
Let $x=z/\eta$.
From the definition of $\eta$, it is clear that $x$ is a feasible solution of \eqref{eq-optimization}.
Moreover, $\lVert x\rVert_2=\eta^{-1}>1$, so that $\lVert \xopt\rVert_2\geq\lVert x\rVert_2>1$.

The gradient of $F$ at $\xopt$ is equal to $2\xopt$.
Then, Proposition~\ref{pro-fritz} asserts that there exist vectors $y_1,\ldots,y_m\in\SSb^{n-1}\cap K'$, and nonnegative multipliers $\lambda_0,\lambda_1,\ldots,\lambda_m\geq0$, such that $\xopt^\top y_i=1$ for every $1\leq i\leq m$, $\sum_{i=0}^m \lambda_i>0$, and $\lambda_0\xopt=\sum_{i=1}^m \lambda_iy_i$.
If $\lambda_0=0$, then $0=\sum_{i=1}^m \lambda_i\xopt^\top y_i = \sum_{i=1}^m \lambda_i >0$, a contradiction.
Hence, $\lambda_0>0$.
Suppose that $y_1,\ldots,y_m$ are colinear.
This implies that all $y_i$'s must be parallel to $\xopt$ (because $\lambda_0\xopt=\sum_{i=1}^m \lambda_iy_i$ and $\lambda_0\xopt\neq0$), and since they are in $\SSb^{n-1}$, we have that $y_i=\pm \xopt/\lVert \xopt\rVert_2$, so that $\xopt^\top y_i=\lVert \xopt\rVert_2>1$ or $-\xopt^\top y_i=\lVert \xopt\rVert_2>1$.
This gives a contradiction, since $-y_i$ and $y_i$ are both in $\SSb^{n-1}\cap K'$ (by the symmetry of $\SSb^{n-1}\cap K'$).
Thus, there exist at least two non-colinear vectors $u,v\in\SSb^{n-1}\cap K'$ satisfying $\xopt^\top u=1$ and $\xopt^\top v=1$.

Let $e_1 = \frac{u+v}{\lVert u+v\rVert_2}$.
Since $u$ and $v$ are not colinear, $\lVert u+v\rVert_2<2$, and thus, $\xopt^\top e_1=2/\lVert u+v\rVert_2>1$.
This shows that $e_1\notin\SSb^{n-1}\cap K'$.
By definition, $e_1$ is in $\SSb^{n-1}$, so that $e_1\notin K'$, concluding the proof of the lemma.~\qed
\end{proof}
\endgroup


\bibliographystyle{spmpsci_unsrt}
\bibliography{myrefs}

\end{document}